\newtheorem{lemma}{Lemma}[section]
\newtheorem{corollary}[lemma]{Corollary}
\newtheorem{theorem}[lemma]{Theorem}
\theoremstyle{definition}
\newtheorem{remark}[lemma]{Remark}
\newtheorem{definition}[lemma]{Definition}
\newtheorem*{acknow}{Acknowledgements}
\numberwithin{equation}{section}
\newcommand{\R}{\mathbb{R}}
\newcommand{\s}{\mathbb{S}}
\newcommand{\esssup}{\operatornamewithlimits{ess\,sup}}
\newcommand{\essinf}{\operatornamewithlimits{ess\,inf}}
\newcommand{\eps}{\varepsilon}
\renewcommand{\Gamma}{\varGamma}
\newcommand{\Sf}{{\mathbb S^{n-1}}}
\def\Xint#1{\mathchoice 
  {\XXint\displaystyle\textstyle{#1}}%
  {\XXint\textstyle\scriptstyle{#1}}%
  {\XXint\scriptstyle\scriptscriptstyle{#1}}%
  {\XXint\scriptscriptstyle\scriptscriptstyle{#1}}%
  \!\int}
\def\XXint#1#2#3{{\setbox0=\hbox{$#1{#2#3}{\int}$} 
  \vcenter{\hbox{$#2#3$}}\kern-.5\wd0}}
\def\dashint{\Xint-}
\title[Characterizations of variable exponent Sobolev spaces]{Nonlocal characterizations of \\ variable exponent Sobolev spaces}
\author[G.\ Ferrari]{Gianluca Ferrari}
\author[M.\ Squassina]{Marco Squassina}
\address[G.\ Ferrari]{Dipartimento di Matematica e Fisica
	\newline\indent
	Universit\`a Cattolica del Sacro Cuore
	\newline\indent
	Via dei Musei 41, Brescia, Italy}
\email{gianluca.ferrari03@icatt.it}
\address[M.\ Squassina]{Dipartimento di Matematica e Fisica
	\newline\indent
	Universit\`a Cattolica del Sacro Cuore
	\newline\indent
	Via dei Musei 41, Brescia, Italy}
\email{marco.squassina@unicatt.it}
\keywords{Anisotropic Sobolev spaces, singular limit formulas, Nonlocal characterizations.}
\subjclass[2010]{35J92, 35P30, 34L16}
\thanks{The second author is member
	of {\em Gruppo Nazionale per l'Analisi Ma\-te\-ma\-ti\-ca, la Probabilit\`a e le loro Applicazioni} (GNAMPA) 
	of the {\em Istituto Nazionale di Alta Matematica} (INdAM)}
\begin{document}

\begin{abstract}
We obtain some nonlocal characterizations for a class of variable exponent Sobolev spaces 
arising in nonlinear elasticity theory and in the theory of electrorheological fluids. We also get 
a singular limit formula extending Nguyen results to the anisotropic case.
\end{abstract}

\maketitle


\section{Introduction}
In the last twenty years, starting from the work by Bourgain, Brezis and Mironescu \cite{BBM1}, 
there has been a considerable effort in the literature to provide some useful nonlocal characterizations 
of functions in Sobolev spaces. The results in \cite{BBM1} are mainly for $W^{1,p}(\Omega)$ on a bounded domain $\Omega$, but they can be 
extended to the whole space setting. In particular, if $p\in(1,+\infty)$ and $u\in L^p(\R^n)$, then $u\in W^{1,p}(\R^n)$ if and only if
$$
\sup_{s\in (0,1)}(1-s)\int_{\R^n}\int_{\R^n}\frac{|u(x)-u(y)|^{p}}{|x-y|^{n+ps}}\,dx\,dy<+\infty,
$$ 
in which case
$$
\lim_{s\nearrow 1}(1-s)\int_{\R^n}\int_{\R^n}\frac{|u(x)-u(y)|^{p}}{|x-y|^{n+ps}}\,dx\,dy=K_{n,p} \int_{\R^n}|\nabla u|^p dx,
$$
where
\begin{equation}
\label{K}
K_{n,p} = \frac{1}{p} \int_{\mathbb{S}^{n-1}} \left|\,\omega\cdot\boldsymbol{e}\,\right|^p\,d\mathcal{H}^{n-1}(\omega), \quad  \boldsymbol{e} \in \s^{n-1}.
\end{equation}
Another nonlocal characterization  was obtained by Nguyen in \cite{nguyen06,NgSob2,Ng11} 
involving a nonhomogenous functional. More precisely,
if $p\in(1,+\infty)$ and $u\in L^p(\R^n)$, then $u\in W^{1,p}(\R^n)$ if and only if
$$
\sup_{\delta\in (0,1)}\underset{|u(x)-u(y)|>\delta}{\int_{\R^n}\int_{\R^n}} \frac{\delta^{p}}{|x-y|^{n+p}} \,dx\,dy<+\infty,
$$
in which case
$$
\lim_{\delta\searrow 0} \underset{|u(x)-u(y)|>\delta}{\int_{\R^n}\int_{\R^n}} \frac{\delta^{p}}{|x-y|^{n+p}} \, dx \, dy = K_{n,p} \int_{\R^n} |\nabla u|^{p} \, dx.
$$
Nonhomogeneous quantities like these appear in some new estimates for the topological degree investigated in \cite{BBNg1}. The limiting
case $p=1$ is related to BV functions but it is actually more delicate, see \cite{nguyen06,NgSob2}.  
On the other hand, differential equations and variational problems involving variable $p(x)$-growth conditions, and hence variable exponent Sobolev spaces $W^{1,p(\cdot)}\left(\R^n\right)$, arise
from nonlinear elasticity theory and electrorheological fluids, and have been the target of various investigations, especially in regularity theory and in nonlocal problems 
(see e.g.\ \cite{acerbming,acerbming2,ruz,DHHR}).

Let $p:\R^n\to[1,+\infty)$ be a measurable function. Let's define
$$
p^- := \essinf_{\R^n} p  \qquad \text{and} \qquad p^+ := \esssup_{\R^n} p.
$$
For $x\in \R^n$, we set
\begin{equation}
\label{K(x)}
K_{n,p(x)} := \frac{1}{p(x)} \int_{\mathbb{S}^{n-1}} \left|\omega\cdot\boldsymbol{e}\right|^{p(x)}\,d\mathcal{H}^{n-1}(\omega) ,\quad \boldsymbol{e}\in\mathbb{S}^{n-1}.
\end{equation}
We also set
$$
W^{1,p^{\pm}}\left(\R^n\right):=W^{1,p^{+}}\left(\R^n\right)\cap W^{1,p^{-}}\left(\R^n\right) .
$$
In this framework, in the spirit of the results of \cite{nguyen06}, we have the following 

\begin{theorem}
\label{ANF}
Let $1<p^- \le p^+<+\infty$ and let $u\in W^{1,p^{\pm}}\left(\R^n\right)$. Then
\begin{enumerate}
\item[(a)] there exists $C>0$, depending only on $n$ and $p^\pm$, such that for every $\delta>0$
\begin{equation}
\label{convergence}
\underset{|u(x)-u(y)|>\delta}{\int_{\R^n}\int_{\R^n}} \frac{\delta^{p(x)}}{|x-y|^{n+p(x)}} \,dx\,dy \le C \left(\|\nabla u\|_{L^{p^+}\left(\R^n\right)}^{p^+}+\|\nabla u\|_{L^{p^-}\left(\R^n\right)}^{p^-}\right);
\end{equation}
\item[(b)] we have 
$$
\lim_{\delta\to0} \underset{|u(x)-u(y)|>\delta}{\int_{\R^n}\int_{\R^n}} \frac{\delta^{p(x)}}{|x-y|^{n+p(x)}} \, dx \, dy = \int_{\R^n} K_{n,p(x)} |\nabla u(x)|^{p(x)} \, dx ;
$$
\item[(c)]  if $u \in L^{p(\cdot)}\left(\R^n\right)$ and 
$$
\sup_{0<\delta<1}  \underset{|u(x)-u(y)|>\delta}{\int_{\R^n}\int_{\R^n}} \frac{\delta^{p(x)}}{|x-y|^{n+p(x)}}\, dx \, dy <+\infty  ,
$$
then $u\in W^{1,p(\cdot)}\left(\R^n\right)$.
\end{enumerate}
\end{theorem}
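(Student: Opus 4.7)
I would prove the three items in order, with (a) providing the uniform control that feeds into both (b) and (c).

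For part (a) the heart of the argument is the pointwise inequality
$$
\frac{\delta^{p(x)}}{|x-y|^{n+p(x)}}=\frac{1}{|x-y|^n}\Bigl(\frac{\delta}{|x-y|}\Bigr)^{p(x)}\le\frac{\delta^{p^-}}{|x-y|^{n+p^-}}+\frac{\delta^{p^+}}{|x-y|^{n+p^+}},
$$
obtained by comparing the ratio $\delta/|x-y|$ with $1$: when $\delta\le|x-y|$ the ratio is bounded by its $p^-$-th power, and when $\delta>|x-y|$ by its $p^+$-th power. Integrating this pointwise inequality over the \emph{same} region $\{|u(x)-u(y)|>\delta\}$ and applying the constant-exponent estimate of Nguyen stated in the introduction at $p=p^-$ and $p=p^+$ delivers \eqref{convergence} at once.

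For part (b) I would first verify the limit on $C^\infty_c(\R^n)$ by direct computation. Passing to polar coordinates $y=x+r\omega$ with $r>0$, $\omega\in\Sf$, and abbreviating the left-hand side of the statement as $F_\delta(u)$, one obtains
$$
F_\delta(u)=\int_{\R^n}\!\int_{\Sf}\!\int_0^{+\infty}\!\chi_{\{|u(x+r\omega)-u(x)|>\delta\}}\frac{\delta^{p(x)}}{r^{1+p(x)}}\,dr\,d\mathcal H^{n-1}(\omega)\,dx.
$$
For smooth $u$ the Taylor expansion $u(x+r\omega)-u(x)=r\,\omega\cdot\nabla u(x)+O(r^2)$ and the one-dimensional identity $\int_{\delta/a}^{+\infty}\delta^{p(x)}r^{-1-p(x)}\,dr=a^{p(x)}/p(x)$ (with $a=|\omega\cdot\nabla u(x)|$) give the pointwise limit $|\omega\cdot\nabla u(x)|^{p(x)}/p(x)$ in the inner integral; splitting at some scale $r=\eta$ to separate the short-range contribution (where Taylor applies) from the long-range one (where $\delta^{p(x)}/r^{1+p(x)}\to0$ uniformly) and then passing the limit under the outer integral through \eqref{convergence} and \eqref{K(x)} yields the smooth case $\lim_\delta F_\delta(u)=\int K_{n,p(x)}|\nabla u|^{p(x)}\,dx$.

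To extend to $u\in W^{1,p^\pm}(\R^n)$, pick $u_k\in C^\infty_c$ with $u_k\to u$ in $W^{1,p^+}\cap W^{1,p^-}$ and set $w_k:=u-u_k$. For each $\eps\in(0,1)$ the inclusion
$$
\{|u(x)-u(y)|>\delta\}\subset\{|u_k(x)-u_k(y)|>(1-\eps)\delta\}\cup\{|w_k(x)-w_k(y)|>\eps\delta\},
$$
combined with $\delta^{p(x)}\le(1-\eps)^{-p^+}((1-\eps)\delta)^{p(x)}$ and its $\eps$-analogue, produces
$$
F_\delta(u)\le(1-\eps)^{-p^+}F_{(1-\eps)\delta}(u_k)+\eps^{-p^+}F_{\eps\delta}(w_k)
$$
and its symmetric counterpart obtained by exchanging $u$ and $u_k$. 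Invoking part (a) to bound $F_{\eps\delta}(w_k)\le C\bigl(\|\nabla w_k\|_{L^{p^+}}^{p^+}+\|\nabla w_k\|_{L^{p^-}}^{p^-}\bigr)\to0$ as $k\to\infty$, the smooth case for $u_k$, the modular convergence $\int K_{n,p(x)}|\nabla u_k|^{p(x)}\to\int K_{n,p(x)}|\nabla u|^{p(x)}$ (dominated convergence through $|\nabla u_k|^{p(x)}\le|\nabla u_k|^{p^-}+|\nabla u_k|^{p^+}$), and finally $\eps\to 0$, closes (b).

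For part (c), assuming $u\in L^{p(\cdot)}(\R^n)$ together with the sup bound, I would mollify $u_\eta:=u*\rho_\eta$ and try to obtain an inequality of the form $F_\delta(u_\eta)\le C\,F_\delta(u)$ uniformly in $\eta$ via a Jensen/convexity argument on the convolution applied to the increments $u(x-z)-u(y-z)$. Feeding this into (b) applied to the smooth $u_\eta$ and letting $\delta\to0$ would bound the modular $\int|\nabla u_\eta|^{p(x)}\,dx$ uniformly in $\eta$, hence $\|\nabla u_\eta\|_{L^{p(\cdot)}}\le C$; reflexivity of $L^{p(\cdot)}$, weak compactness, and $u_\eta\to u$ in $L^{p(\cdot)}$ would then identify the weak limit with the distributional gradient $\nabla u$, giving $u\in W^{1,p(\cdot)}(\R^n)$. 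The main obstacle is precisely this mollification step: transferring the non-local restriction $|u_\eta(x)-u_\eta(y)|>\delta$ through the convolution in a way that is compatible with the $x$-dependent exponent $p(x)$ is considerably more delicate than in the constant-exponent case, and it is where the genuinely new analytic work in the Musielak–Orlicz setting must be done.
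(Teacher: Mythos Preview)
Your arguments for (a) and (b) are correct but take a genuinely different route from the paper. For (a), the paper passes to polar coordinates and uses the directional maximal function: from $|u(x+h\omega)-u(x)|\le h\,\mathcal{M}_\omega(\nabla u)(x)$ it reduces the left-hand side to $\int_{\R^n}\frac{1}{p(x)}|\mathcal{M}_\omega(\nabla u)|^{p(x)}\,dx$, and only \emph{then} splits into $p^\pm$ to invoke the constant-exponent maximal inequality. Your pointwise splitting of $(\delta/|x-y|)^{p(x)}$ into the $p^-$- and $p^+$-kernels is more elementary and reduces (a) to the known constant-exponent Nguyen bound as a black box. For (b), the paper again exploits the maximal-function domination: the sets $A(\delta)=\{|u(x+\delta h\omega)-u(x)|/(\delta h)\cdot h>1\}$ are contained in a $\delta$-independent set $B=\{\mathcal{M}_\omega(\nabla u)(x)\,h>1\}$ whose integral is finite by (a), so the limit passes directly by dominated convergence, with no approximation by smooth functions. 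Your approximation scheme via $C^\infty_c$ and the splitting $\{|u|>\delta\}\subset\{|u_k|>(1-\eps)\delta\}\cup\{|w_k|>\eps\delta\}$ is the classical alternative and works because (a) gives the uniform control on the remainder. In short, the paper's maximal-function route handles (a) and (b) in one self-contained pass, while your route is shorter for (a) but needs more limit bookkeeping for (b).

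For (c), you correctly flag the mollification transfer $F_\delta(u_\eta)\le C\,F_\delta(u)$ as the crux, and indeed the $x$-dependent exponent obstructs a direct Jensen argument. The paper does \emph{not} proceed this way: it first converts the $\delta$-functional into the auxiliary quantity
\[
\int_{\R^n}\int_{\R^n}\frac{\eps\,|u(x)-u(y)|^{p(x)+\eps}}{|x-y|^{n+p(x)}}\,dx\,dy
\]
by multiplying the hypothesis by $\eps\delta^{\eps-1}$ and integrating over $\delta\in(0,1)$ (an elementary Fubini computation), and then invokes a direct $\liminf$ inequality for this $\eps$-functional valid for $C^2$ functions. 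The passage from $C^2_b$ to general $u\in L^{p(\cdot)}$ is only sketched in the paper as well (they cite a density argument from the literature), so on this last point you and the paper are in comparable shape; but the $\delta\to\eps$ conversion is a concrete idea that sidesteps the convolution-versus-level-set difficulty you identify.
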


%

Unfortunately, it has not been possible to provide a limit formula in the more general context of the Sobolev space $W^{1,p(\cdot)}\left(\R^n\right)$,
the basic problem being that the Hardy-Littlewood maximal function in one direction on $L^{p(\cdot)}$ fails to be bounded (for the modular)
unless $p(\cdot)$ is a constant \cite{larsp}, see also \cite{Izuki}.
In the limit case $p^-=1$, let $u \in L^{p(\cdot)}(\R^n)$ and 
$E=\{x\in\R^n: p(x)=1\}.$  Assume ${\rm int}(E)\neq \emptyset$ and
$$
\sup_{0<\delta<1}  \underset{|u(x)-u(y)|>\delta}{\int_{\R^n}\int_{\R^n}} \frac{\delta^{p(x)}}{|x-y|^{n+p(x)}}\, dx \, dy <+\infty.
$$
Then, if $B$ denotes any ball in $\R^n$, it holds
$$
\sup_{B\subset {\rm int}(E)} |B|^{-\frac{n+1}{n}}\int_B\int_B |u(x)-u(y)|\,dx\,dy<+\infty,
$$
which, for $n=1$, reads as
$$
\sup_{B\subset {\rm int}(E)} \dashint_B\dashint_B |u(x)-u(y)|\,dx\,dy<+\infty,
$$
namely $u\in BMO(E)$, the space of bounded mean oscillation functions on $E$. See Remark \ref{remp1}.

\begin{acknow}
	We would like to thank Hoai-Minh Nguyen for providing uselful remarks about some points in the paper. 
	We also would like to warmly thank Lars Diening for pointing out some observations about the failure of
	boundedness estimates of the Hardy-Littlewood maximal function in one direction in the variable exponent case.
	Part of this paper was written during the preparation of the first author master thesis in Brescia.
\end{acknow}

\section{Preliminary stuff}
\noindent
In this section we recall some basic properties of the variable exponent spaces, see \cite{BDR,DHHR,DH}.

\subsection{Variable exponents spaces}
Let $\Omega\subset\R^n$ be a measurable set and let $p :\R^n\to[1,+\infty)$ be a measurable function. We define $L^{p(\cdot)}\left(\Omega\right)$ as the space of measurable functions $u :\Omega\to\R$ with
$$
\rho_{p(\cdot)} (u) :=\int_\Omega |u(x)|^{p(x)}\,dx < +\infty,
$$
so, denoting by $M\left(\Omega\right)$ the space of measurable functions on the domain $\Omega$, we set
$$
L^{p\left(\cdot\right)}\left(\Omega\right) := \left\{u\in M\left(\Omega\right) : \rho_{p(\cdot)} (u)<+\infty\right\} .
$$
The function $p$ is called exponent of $L^{p(\cdot)}\left(\Omega\right)$, while $\rho_{p(\cdot)}(u)$ is the modular of $u$. 
If $u\in L^{p(\cdot)}\left(\Omega\right)$, 
$$
\|u\|_{L^{p(\cdot)}\left(\Omega\right)} := \inf\left\{\lambda>0 : \rho_{p(\cdot)} \left(\frac{u}{\lambda}\right)\le1\right\}
$$
is a norm for $L^{p(\cdot)} \left(\Omega\right)$, called Luxemburg norm, which makes the space complete. 
In other words, $L^{p(\cdot)} \left(\Omega\right)$ is a Banach space with respect to $\|\cdot\|_{L^{p(\cdot)}\left(\Omega\right)}$.
Taken a locally integrable function $\omega : \R^n \to (0,+\infty)$, we can introduce a weighed version of variable exponent Lebesgue spaces. We define $L^{p(\cdot)}(\Omega,\omega)$ as the space of measurable functions $u : \Omega \to \R$ such that
$$
\rho_{p(\cdot),\,\omega}(u) := \int_{\Omega} |u(x)|^{p(x)} \, \omega(x) \, dx < +\infty,
$$
so we can set
$$
L^{p(\cdot)}(\Omega,\omega) := \left\{u\in M\left(\Omega\right) : \rho_{p(\cdot),\,\omega}(u) < +\infty \right\}.
$$
The function $\omega$ is called weight of the space. Moreover, $L^{p(\cdot)}(\Omega,\omega)$ is a Banach space with norm
$$
\|u\|_{L^{p(\cdot)}\left(\Omega,\omega\right)} := \inf \left\{\lambda > 0 : \rho_{p(\cdot),\,\omega}\left(\frac{u}{\lambda}\right)\le1\right\} .
$$
The following relationship between the norm and the modular holds:
\begin{equation}
\label{weight}
\min\left\{\rho_{p(\cdot),\,\omega}(u)^{\frac{1}{p^-}},\rho_{p(\cdot),\,\omega}(u)^{\frac{1}{p^+}}\right\}
\le \|u\|_{L^{p(\cdot)}\left(\Omega,\,\omega\right)}
\le \max\left\{\rho_{p(\cdot),\,\omega}(u)^{\frac{1}{p^-}},\rho_{p(\cdot),\,\omega}(u)^{\frac{1}{p^+}}\right\} .
\end{equation}
\noindent
We denote by $W^{1,p(\cdot)}\left(\Omega\right)$ the space of 
$u \in L^{p(\cdot)}\left(\Omega\right)$ such that their gradient $\nabla u\in L^{p(\cdot)}\left(\Omega\right)$, so that
$$
W^{1,p(\cdot)}\left(\Omega\right) := \left\{u\in L^{p(\cdot)}\left(\Omega\right) : \exists\nabla u\in L^{p(\cdot)}\left(\Omega\right)\right\} .
$$
If $u\in W^{1,p(\cdot)}\left(\Omega\right)$, the object
$$
\|u\|_{W^{1,p(\cdot)}\left(\Omega\right)} := \|u\|_{L^{p(\cdot)}\left(\Omega\right)} + \|\nabla u\|_{L^{p(\cdot)}\left(\Omega\right)}
$$
is a norm for $W^{1,p(\cdot)} \left(\Omega\right)$. Moreover, $W^{1,p(\cdot)}\left(\Omega\right)$ is a Banach space 
with respect to  $\|\cdot\|_{W^{1,p(\cdot)}\left(\Omega\right)}$.

\subsection{Fractional Sobolev spaces}
If $s\in(0,1)$, we can also extend the concept of fractional Sobolev space to the variable exponent case, as follows. 
Let $\Omega\subset\R^n$ be a measurable set and let $p:\R^n\times\R^n\to[1,+\infty)$ and $q:\R^n\to[1,+\infty)$ be two measurable functions. If we suppose that $p$ and $q$ are two bounded exponents, then there exist $p^+,p^-,q^+,q^-\in[1,+\infty)$ such that
$$
\forall x,y\in\R : \qquad p^-\le p(x,y)\le p^+, \qquad q^-\le q(x) \le q^+ .
$$
Taken $s\in(0,1)$, we denote by $W=W^{s,q(\cdot),p(\cdot\,,\,\cdot)}\left(\Omega\right)$ the functions space
$$
W := \left\{u\in L^{q(\cdot)}\left(\Omega\right)  : \exists\lambda>0 : \int_\Omega\int_\Omega \frac{|u(x)-u(y)|^{p(x,y)}}{\lambda^{p(x,y)}|x-y|^{n+sp(x,y)}}\,dx\,dy<+\infty\right\} .
$$
If we set the variable exponent seminorm as
$$
[u]_{s,p(\cdot\,,\,\cdot)} := \inf \left\{\lambda>0 : \int_\Omega\int_\Omega \frac{|u(x)-u(y)|^{p(x,y)}}{\lambda^{p(x,y)}|x-y|^{n+sp(x,y)}}\,dx\,dy\le 1 \right\} ,
$$
it is possible to prove that $W$ is a Banach space with respect to the norm
$$
\|u\|_W := \|u\|_{L^{q(\cdot)}\left(\Omega\right)}+[u]_{s,p(\cdot\,,\,\cdot)} .
$$
The concepts introduced are consistent with the classical definitions of the spaces $L^p\left(\Omega\right)$, $W^{1,p}\left(\Omega\right)$ and $W^{s,p}\left(\Omega\right)$ when the functions $p$ and $q$ are equal and constant.

\subsection{Maximal functions}
Let $u\in L^1_{\rm loc}\left(\R^n\right)$ be a local summable function. We define its maximal function $\mathcal{M}(u)$ by setting
$$
\mathcal{M}(u)(x):=\sup_{r>0} \, \dashint_{B(x,r)} |u(y)| \, dy =\sup_{r>0} \frac{1}{\mathcal{L}^n\left(B(x,r)\right)} \int_{B(x,r)} |u(y)| \, dy ,
$$
where $\mathcal{L}^n\left(A\right)$ represents the $n$-dimensional Lebesgue measure of $A\subset\R^n$.
Moreover, we introduce the Hardy-Littlewood maximal operator as the function
$\mathcal{M} :\left\{u\mapsto \mathcal{M}(u)\right\}$.

\begin{theorem}
\label{teomax}
Let $p\in(1,+\infty]$. Then there exists a constant $C>0$, depending only on the dimension $n$ of the space and on the index $p$, such that
\begin{equation*}
\forall u\in L^{p}\left(\R^n\right) : \quad \|\mathcal{M}(u)\|_{L^{p}\left(\R^n\right)} \le C \, \|u\|_{L^{p}\left(\R^n\right)}.
\end{equation*}
In other words, the Hardy-Littlewood maximal operator $\mathcal{M} : L^{p}\left(\R^n\right)\to L^{p}\left(\R^n\right)$ is bounded.
\end{theorem}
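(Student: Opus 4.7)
The plan is to prove the classical Hardy--Littlewood maximal inequality by the standard two-step strategy: first establish the two endpoint bounds, weak-type $(1,1)$ and strong-type $(\infty,\infty)$, and then interpolate via the Marcinkiewicz interpolation theorem to cover the intermediate range $1<p<\infty$.

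First I would handle the trivial endpoint $p=\infty$. For $u\in L^{\infty}(\R^n)$, every ball average satisfies
$$
\dashint_{B(x,r)}|u(y)|\,dy\le \|u\|_{L^{\infty}(\R^n)},
$$
so taking the supremum over $r>0$ gives $\|\mathcal{M}(u)\|_{L^{\infty}}\le\|u\|_{L^{\infty}}$. This is the easy end.

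The real work is the weak-$(1,1)$ inequality: there exists $C_n$ such that for every $\lambda>0$ and every $u\in L^{1}(\R^n)$,
$$
\mathcal{L}^n\bigl(\{\mathcal{M}(u)>\lambda\}\bigr)\le \frac{C_n}{\lambda}\,\|u\|_{L^{1}(\R^n)}.
$$
For this, I would use a Vitali-type covering argument. Given $\lambda>0$, fix a compact $K\subset\{\mathcal{M}(u)>\lambda\}$; for each $x\in K$ there is a ball $B_x=B(x,r_x)$ with $\int_{B_x}|u|>\lambda\,\mathcal{L}^n(B_x)$. By compactness one extracts a finite subcover and then, via the Vitali $5r$-covering lemma, a disjoint subfamily $\{B_{x_j}\}$ such that $K\subset\bigcup_j 5B_{x_j}$. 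Then
$$
\mathcal{L}^n(K)\le 5^n\sum_j \mathcal{L}^n(B_{x_j})\le \frac{5^n}{\lambda}\sum_j\int_{B_{x_j}}|u|\,dy\le \frac{5^n}{\lambda}\|u\|_{L^1},
$$
and taking the supremum over compact $K$ yields the claim with $C_n=5^n$. This Vitali-covering step is the main obstacle and the only nontrivial geometric input of the proof.

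Finally I would invoke the Marcinkiewicz interpolation theorem. The sublinear operator $\mathcal{M}$ is of weak type $(1,1)$ by the previous step and of strong type $(\infty,\infty)$ by the trivial bound; hence for every $p\in(1,\infty)$ it is of strong type $(p,p)$, with operator norm depending only on $n$ and $p$. Combined with the already established case $p=\infty$, this gives the estimate $\|\mathcal{M}(u)\|_{L^{p}(\R^n)}\le C\|u\|_{L^{p}(\R^n)}$ for all $p\in(1,+\infty]$, concluding the proof. The explicit constant coming out of Marcinkiewicz blows up as $p\searrow 1$, which is consistent with the well-known failure of $\mathcal{M}\colon L^1\to L^1$.
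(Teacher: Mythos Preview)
Your argument is correct and is the standard proof of the Hardy--Littlewood maximal inequality: the trivial $L^\infty$ bound, the weak-$(1,1)$ estimate via a Vitali covering, and Marcinkiewicz interpolation for the intermediate range. The paper, however, does not give a proof at all --- it simply refers the reader to \cite[Chapter~1, Theorem~1]{Stein}. Your outline is essentially the argument found in that reference, so there is no substantive difference in approach; you have just supplied the details that the paper outsources.
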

\begin{proof}
See \cite[Chapter 1, Theorem 1]{Stein}.
\end{proof}

\noindent
For our purposes, taken any $\omega \in \Sf$, we also define
$$
\mathcal{M}_\omega(u)(x) := \sup_{h>0} \frac{1}{h} \int_0^h |u(x+s\omega)|\,ds \,
$$
as the maximal function of $u$ along the considered direction $\omega$. Arguing as in \cite[Lemma 3.1]{NPSV}, it is possible to prove the existence of a universal constant $C>0$ such that, for all $\omega \in \Sf$,
$$
\int_{\R^n} |\mathcal{M}_\omega(u)(x)|^p \,dx \le C \int_{\R^n} |u(x)|^p \,dx , \qquad \forall u \in L^p\left(\R^n\right).
$$ 
%
For variable exponents the inequality fails unless $p(\cdot)$ is constant \cite{larsp,Izuki}. For instance, if
$p(x)=2$ on $(-\infty,-2)$ and $p(x)\geq 4$ on $[2,+\infty)$, then $\int_\R |u|^{p(x)}dx<+\infty$, 
but  $\int_\R |\mathcal{M}(u)|^{p(x)}dx=+\infty$ for the function $u(x)=|x|^{-1/3}\chi_{[2,\infty)}(x)$.

\begin{definition}
A function $\alpha : \Omega\subset\R^n\to \R$ is called $\log$-H\" older continuous on $\Omega$ if there exists a constant $c>0$ such that
\begin{equation}
\label{holder}
\forall x,y\in \Omega: \quad |\alpha(x)-\alpha(y)| \le \frac{c}{\log\left(e+|x-y|^{-1}\right)} .
\end{equation}
Moreover, $\alpha$ satisfies the $\log$-H\" older decay condition if there exist $\alpha_\infty\in\R$ and $c>0$ such that
\begin{equation}
\label{decay}
\forall x\in \Omega: \quad |\alpha(x)-\alpha_\infty| \le \frac{c}{\log\left(e+|x|\right)} .
\end{equation}
The function $\alpha$ is called globally $\log$-H\" older continuous on the domain $\Omega$ if it is $\log$-H\" older continuous on $\Omega$ and it satisfies the decay condition just introduced. In this case, the constant $c$ satisfying both the equations \eqref{holder} and \eqref{decay} is called $\log$-H\" older constant of $\alpha$.
\end{definition}
\noindent
Now let's introduce the following class of variable exponents:
\begin{equation*}
\mathcal{P}^{\log}\left(\Omega\right) := \left\{p\in M\left(\Omega\right) : \frac{1}{p}\text{ is globally $\log$-H\" older continuous}\right\}.
\end{equation*}
We denote by $c_{\log}\left(p\right)$, or $c_{\log}$, the $\log$-H\" older constant of $1 \slash p$ and, if $\Omega$ is a bounded domain, we are able to introduce the index $p_\infty$ by setting
\begin{equation*}
p_\infty := \left(\lim_{|x|\to+\infty} \frac{1}{p(x)}\right)^{-1} ,
\end{equation*}
with the usual convention $1\slash+\infty=0$.

Let us introduce an important theorem about the boundness of the Hardy-Littlewood maximal operator $\mathcal{M} : \left\{u\mapsto \mathcal{M}\left(u\right)\right\}$ on the Lebesgue space $L^{p(\cdot)}\left(\R^n\right)$.

\begin{theorem}
\label{maxfunctiongen}
Let $p\in\mathcal{P}^{\log}\left(\R^n\right)$ be a bounded variable exponent, with $p^->1$. Then there exists a constant $K_{p^-}>0$, depending only on the dimension $n$ of the space and on the $\log$-H\" older constant $c_{\log}(p)$ of $1 \slash p$, such that
\begin{equation*}
\forall u\in L^{p(\cdot)}\left(\R^n\right) : \quad \|\mathcal{M}(u)\|_{L^{p(\cdot)}\left(\R^n\right)} \le K_{p^-} \|u\|_{L^{p(\cdot)}\left(\R^n\right)}.
\end{equation*}
In other words, the Hardy-Littlewood maximal operator $\mathcal{M} : L^{p(\cdot)}\left(\R^n\right)\to L^{p(\cdot)}\left(\R^n\right)$ is bounded.
\end{theorem}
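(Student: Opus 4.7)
The plan is to reduce the norm inequality to a modular inequality via the estimate \eqref{weight}: after homogeneity, it is enough to produce a constant $K=K(n,c_{\log})$ such that
\[
  \rho_{p(\cdot)}(\mathcal{M}(u))\le K \qquad \text{whenever} \qquad \rho_{p(\cdot)}(u) \le 1,
\]
for then, replacing $u$ by $u/\|u\|_{L^{p(\cdot)}(\R^n)}$, the Luxemburg norm of $\mathcal{M}(u)$ is controlled by that of $u$ with a constant depending only on $n$ and the $\log$-H\"older constant of $1/p$.

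The heart of the argument is a pointwise Diening-type estimate: under $p\in\mathcal{P}^{\log}(\R^n)$ with $p^->1$ one produces constants $C_1,C_2>0$ and an exponent $m>n$, depending only on $n$ and $c_{\log}$, such that, for every $u$ with $\rho_{p(\cdot)}(u)\le 1$,
\[
  \mathcal{M}(u)(x)^{p(x)}
  \le C_1\left[\mathcal{M}\!\left(|u|^{p(\cdot)/p^-}\right)\!(x)\right]^{p^-}
     + \frac{C_2}{(e+|x|)^{m}}, \qquad x\in\R^n.
\]
To derive this, one fixes a ball $B=B(x,r)$ and analyses $\bigl(\dashint_B |u(y)|\,dy\bigr)^{p(x)}$; splitting the integrand along $\{|u|\le 1\}$ and $\{|u|>1\}$ and invoking Jensen's and H\"older's inequalities, the task reduces to controlling the multiplicative factor $|B|^{p(x)-p(y)}$ uniformly for $y\in B$. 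The local log-H\"older condition \eqref{holder} handles the regime $r\le 1$ (producing a universal constant), whereas the decay condition \eqref{decay} is invoked in the complementary regime by comparing both $p(x)$ and $p(y)$ with the common value $p_\infty$; it is exactly this second comparison that generates the summable tail $(e+|x|)^{-m}$.

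Once the pointwise bound is secured, the proof is concluded by integration. Since $\rho_{p(\cdot)}(u)\le 1$ translates to $\bigl\||u|^{p(\cdot)/p^-}\bigr\|_{L^{p^-}(\R^n)}^{p^-}\le 1$, the classical Hardy-Littlewood-Wiener inequality Theorem \ref{teomax}, applied with the constant exponent $p^->1$, yields
\[
  \int_{\R^n}\left[\mathcal{M}\!\left(|u|^{p(\cdot)/p^-}\right)\!(x)\right]^{p^-}\!dx
  \le C(n,p^-)\int_{\R^n} |u(x)|^{p(x)}\,dx\le C(n,p^-),
\]
while $\int_{\R^n}(e+|x|)^{-m}\,dx<+\infty$ because $m>n$. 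Combining these with the pointwise estimate gives the required modular bound, and \eqref{weight} then produces the Luxemburg-norm inequality with $K_{p^-}$ as stated.

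The main obstacle is precisely the construction of the pointwise estimate: one must \emph{freeze} the exponent $p(y)$ to its value $p(x)$ uniformly across averaging balls of arbitrary radius and for all centres $x\in\R^n$. Short-range freezing is controlled by $\log$-H\"older continuity, long-range behaviour by the decay condition, and the class $\mathcal{P}^{\log}(\R^n)$ is designed precisely to treat both regimes together. Without the decay hypothesis the tail function and the constants would diverge, and the conclusion is indeed known to fail, as illustrated by the explicit counterexample with $p(x)=2$ for $x<-2$ and $p(x)\ge 4$ for $x\ge 2$ recalled immediately before the statement.
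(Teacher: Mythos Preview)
The paper does not actually prove this theorem: its entire proof is the single line ``See \cite[Theorem 4.3.8]{DHHR}.'' What you have written is a faithful outline of precisely that proof --- Diening's argument via the pointwise key estimate, reduction to the classical Hardy--Littlewood inequality at the constant exponent $p^-$, and an integrable tail coming from the decay condition. So your proposal is correct and is in fact \emph{more} than what the paper supplies; it is the content of the cited reference rather than an alternative route.

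One small remark: in your opening paragraph you say the final constant depends only on $n$ and $c_{\log}$, but in your last integration step you (correctly) invoke $C(n,p^-)$ from Theorem~\ref{teomax}. The constant therefore also depends on $p^-$, which is why the paper names it $K_{p^-}$; you may want to align the first paragraph with this.
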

\begin{proof}
See \cite[Theorem 4.3.8]{DHHR}.
\end{proof}
\noindent
As shown in \cite[Corollary 4.3.11]{DHHR}, the previous theorem holds also in the case of exponents  $p\in\mathcal{P}^{\log}\left(\Omega\right)$ and functions $u\in L^{p\left(\cdot\right)}\left(\Omega\right)$, with $\Omega\subset \R^n$.

\section{Anisotropic formulas}
\noindent

We are now ready to study the behavior of the singular limit in the anisotropic case. 
In the following, unless otherwise stated, we will assume $1<p^-\leq p^+<+\infty$.

\begin{lemma}
\label{lemma4.2}
Let $u\in W^{1,p^{\pm}}\left(\R^n\right)$.
Then there exists a positive constant $C$, dependent only on $n$ and $p^\pm,$  such that for all $\delta>0$
$$
\underset{|u(x)-u(y)|>\delta}{\int_{\R^n}\int_{\R^n}} \frac{\delta^{p(x)}}{|x-y|^{n+p(x)}}\,dx\,dy \le C \left(\|\nabla u\|_{L^{p^+}\left(\R^n\right)}^{p^+}+\|\nabla u\|_{L^{p^-}\left(\R^n\right)}^{p^-}\right) .
$$
In particular, the integral at the left-hand side is finite.
\end{lemma}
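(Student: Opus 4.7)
The plan is to pass to spherical coordinates centered at $x$ and reduce the double integral to a one-dimensional statement along rays, so that only constant-exponent directional maximal function estimates (which were recalled after Theorem \ref{teomax}) are needed. Write $y=x+r\omega$ with $r>0$ and $\omega\in\Sf$; then
\[
\underset{|u(x)-u(y)|>\delta}{\int_{\R^n}\!\int_{\R^n}}\frac{\delta^{p(x)}}{|x-y|^{n+p(x)}}\,dx\,dy
=\int_{\R^n}\!\int_{\Sf}\!\int_{A_\omega(x,\delta)}\frac{\delta^{p(x)}}{r^{1+p(x)}}\,dr\,d\mathcal{H}^{n-1}(\omega)\,dx,
\]
where $A_\omega(x,\delta)=\{r>0:|u(x+r\omega)-u(x)|>\delta\}$.

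Next I would exploit the fundamental theorem of calculus along the direction $\omega$: since
\[
|u(x+r\omega)-u(x)|\le \int_0^r |\nabla u(x+s\omega)|\,ds \le r\,\mathcal{M}_\omega(|\nabla u|)(x),
\]
the membership $r\in A_\omega(x,\delta)$ forces $r>\delta/\mathcal{M}_\omega(|\nabla u|)(x)$. Setting $M_\omega(x):=\mathcal{M}_\omega(|\nabla u|)(x)$ and computing the elementary integral,
\[
\int_{\delta/M_\omega(x)}^{+\infty}\frac{\delta^{p(x)}}{r^{1+p(x)}}\,dr=\frac{M_\omega(x)^{p(x)}}{p(x)}.
\]

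The remaining obstacle is that $p(\cdot)$ is variable and the mapping $M_\omega(x)^{p(x)}$ cannot be directly controlled by a constant-exponent maximal inequality; at this point Theorem \ref{maxfunctiongen} is unavailable since we do not assume log-H\"older regularity. I would sidestep this by the elementary splitting $t^{p(x)}\le t^{p^+}+t^{p^-}$ valid for every $t\ge 0$ (the first term dominates when $t\ge 1$, the second when $t\le 1$). Thus
\[
\int_{\R^n}\!\int_{\Sf}\frac{M_\omega(x)^{p(x)}}{p(x)}\,d\mathcal{H}^{n-1}(\omega)\,dx
\le \frac{1}{p^-}\int_{\Sf}\!\int_{\R^n}\bigl(M_\omega(x)^{p^+}+M_\omega(x)^{p^-}\bigr)\,dx\,d\mathcal{H}^{n-1}(\omega).
\]

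Finally I apply the uniform $L^{p^\pm}$-boundedness of the one-dimensional directional maximal operator $\mathcal{M}_\omega$ stated just after Theorem \ref{teomax} (uniform in $\omega\in\Sf$, with constant depending only on $n$ and $p^\pm$), which yields
\[
\int_{\R^n} M_\omega(x)^{p^\pm}\,dx \le C\int_{\R^n}|\nabla u(x)|^{p^\pm}\,dx.
\]
Combining everything with the finite surface measure $\mathcal{H}^{n-1}(\Sf)$ gives the claimed bound with a constant $C$ depending only on $n$ and $p^\pm$. The only subtle point is the switch from variable to constant exponent via the $t^{p^+}+t^{p^-}$ trick; this is precisely what allows the argument to bypass the failure of the full Hardy--Littlewood maximal inequality in $L^{p(\cdot)}$ without regularity on $p$.
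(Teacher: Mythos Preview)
Your proof is correct and follows essentially the same route as the paper: polar coordinates, the fundamental theorem of calculus to reduce to the directional maximal function $\mathcal{M}_\omega(|\nabla u|)$, the explicit evaluation of $\int_{\delta/M_\omega}^{\infty}\delta^{p(x)}r^{-1-p(x)}\,dr$, the pointwise splitting $t^{p(x)}\le t^{p^+}+t^{p^-}$, and finally the uniform $L^{p^\pm}$-bound for $\mathcal{M}_\omega$. The paper phrases the splitting as a case distinction $\{M_\omega\le 1\}\cup\{M_\omega>1\}$ rather than the inequality $t^{p(x)}\le t^{p^+}+t^{p^-}$, but this is the same argument.
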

\begin{proof}
By using polar coordinates, we have
$$
\begin{aligned}
\underset{\left|u(x)-u(y)\right|>\delta} {\int_{\R^n}\int_{\R^n}} \frac{\delta^{p(x)}}{|x-y|^{n+p(x)}} \, dx \, dy & = \underset{\left|u(x+h\omega)-u(x)\right|>\delta} {\int_{\mathbb{S}^{n-1}}\int_{\R^n} \int_0^{+\infty}}\frac{\delta^{p(x)}}{h^{n+p(x)}} h^{n-1}  \,dh \, dx \, d\mathcal{H}^{n-1}(\omega) \\
& = \underset{\left|u(x+h\omega)-u(x)\right|>\delta} {\int_{\mathbb{S}^{n-1}}\int_{\R^n} \int_0^{+\infty}}\frac{\delta^{p(x)}}{h^{p(x)+1}} \,dh \, dx \, d\mathcal{H}^{n-1}(\omega) .
\end{aligned}
$$
Thanks to this equation, it is sufficient to prove the existence of a constant $C>0$, dependent only on $p^\pm$, such that for all $\omega\in\mathbb{S}^{n-1}$ we have
$$
\underset{|u(x+h\omega)-u(x)|>\delta}{\int_{\R^n}\int_0^{+\infty}} \frac{\delta^{p(x)}}{h^{p(x)+1}}\,dh\,dx \le C \left(\|\nabla u\|_{L^{p^+}\left(\R^n\right)}^{p^+}+\|\nabla u\|_{L^{p^-}\left(\R^n\right)}^{p^-}\right) .
$$
From the fundamental theorem of calculus,
$$
\begin{aligned}
\left|u\right(x+h\omega \left)-u(x)\right| & \le \int_{0}^{h} \left| \frac{d}{ds} u(x+s\omega) \right| \, ds = \int_{0}^{h} \left| \nabla u(x+s\omega) \cdot \omega \right| \, ds \\
& \le \int_{0}^{h} \left| \nabla u(x+s\omega) \right| \, ds \le h\, \mathcal{M}_\omega \left(\nabla u\right)(x) ,
\end{aligned}
$$
for a.e. $(x,h)\in\R^n\times(0,+\infty)$, where
$$
\mathcal{M}_\omega \left(u\right)\left(x\right) = \sup_{h>0} \, \dashint_{0}^{h} \left|u\left(x+s\omega\right)\right| \, ds
$$
is the maximal function of $u$ with respect to the direction $\omega\in\Sf$ previously introduced. From this inequality, it follows the set inclusion
$$
\left\{x\in\R^n : \left|u\left(x+h\omega\right)-u(x)\right|>\delta\right\}\subset\left\{x\in\R^n : h \, \mathcal{M}_\omega \left(\nabla u\right)(x)>\delta\right\},
$$
from which we have
$$
\begin{aligned}
\underset{\left|u\left(x+h\omega\right)-u(x)\right|>\delta}{\int_{\R^n}\int_0^{+\infty}} \frac{\delta^{p(x)}}{h^{p(x)+1}} \, dh\,dx & \le \int_{\R^n} \underset{h \mathcal{M}_\omega\left(\nabla u\right)(x) >\delta}{\int_0^{+\infty}} \frac{\delta^{p(x)}}{h^{p(x)+1}} \, dh\,dx \\
& = \int_{\R^n} \left[-\frac{1}{p(x)}\frac{\delta^{p(x)}}{h^{p(x)}}\right]_{{\delta}\slash{\mathcal{M}_\omega\left(\nabla u\right)(x)}}^{+\infty} \,dx \\
& = \int_{\R^n} \frac{1}{p(x)} \left|\mathcal{M}_\omega\left(\nabla u\right)(x)\right|^{p(x)} \,dx .
\end{aligned}
$$
Recalling that $p^-\le p(x)\le p^+$, we can increase the multiplicative inverse of $p(x)$ to $1 \slash p^-$, getting
\begin{equation}
\label{keyineq}
\underset{\left|u\left(x+h\boldsymbol{e}_n\right)-u(x)\right|>\delta}{\int_{\R^n}\int_0^{+\infty}} \frac{\delta^{p(x)}}{h^{p(x)+1}} \, dh\,dx 
\le \frac{1}{p^-} \int_{\R^n} \left|\mathcal{M}_\omega\left(\nabla u\right)(x)\right|^{p(x)} \,dx .
\end{equation}
At this point, we split the integral at the right-hand side, over the sets of $x\in\R^n$ with
$$
\mathcal{M}_\omega\left(\nabla u\right)(x)\le1 \qquad \text{or} \qquad \mathcal{M}_\omega\left(\nabla u\right)(x)>1 ,
$$
so that we are able to increase the integrand function by using, respectively, the exponents $p^-$ or $p^+$ and then by extending both the integrals over the entire space $\R^n$. In this way, we get
$$
\frac{1}{p^-}  \int_{\R^n} \left|\mathcal{M}_\omega\left(\nabla u\right)(x)\right|^{p(x)} \,dx 
\le \frac{1}{p^-} \int_{\R^n} \left|\mathcal{M}_\omega\left(\nabla u\right)(x)\right|^{p^-} \,dx
+ \frac{1}{p^-} \int_{\R^n} \left|\mathcal{M}_\omega\left(\nabla u\right)(x)\right|^{p^+} \,dx .
$$
As a direct conseguence of the theory of maximal funcrions, there exist positive constants $C_{p^\pm}$, depending on $p^\pm$, such that
$$
\begin{aligned}
\frac{1}{p^-} \int_{\R^n} \left|\mathcal{M}_\omega\left(\nabla u\right)(x)\right|^{p(x)} \,dx
& \le \frac{1}{p^-} \int_{\R^{n}} \left|\mathcal{M}_\omega \left(\nabla u\right)(x)\right|^{p^-} \,dx +\frac{1}{p^-} \int_{\R^{n}} \left|\mathcal{M}_\omega \left(\nabla u\right)(x)\right|^{p^+} \,dx \\
&\le \frac{C_{p^-}}{p^-} \int_{\R^{n}} \left|\nabla u(x)\right|^{p^-} \,dx + \frac{C_{p^+}}{p^-} \int_{\R^{n}} \left|\nabla u(x)\right|^{p^+} \,dx \\
&\le C \left(\|\nabla u\|_{L^{p^+}\left(\R^n\right)}^{p^+}+\|\nabla u\|_{L^{p^-}\left(\R^n\right)}^{p^-}\right) ,
\end{aligned}
$$
where $C:=\max\left\{C_{p^+},C_{p^-}\right\}/ p^-$. The assertion follows.
\end{proof}

\begin{remark}
Observe that, if $p : \R^n\to[1,+\infty)$ is bounded and measurable, then we have
$$
W^{1,p^{\pm}}\left(\R^n\right) \subset  W^{1,p(\cdot)}\left(\R^n\right) .
$$
In fact, taken $u\in W^{1,p^{\pm}}\left(\R^n\right)$, first of all
$$
\begin{aligned}
\int_{\R^n} \left|u(x)\right|^{p(x)} dx & = \int_{u(x)\le1} \left|u(x)\right|^{p(x)} dx + \int_{u(x)>1} \left|u(x)\right|^{p(x)} dx \\
& \le \|u\|_{L^{p^-}\left(\R^n\right)}^{p^-}+ \|u\|_{L^{p^+}\left(\R^n\right)}^{p^+} 
\end{aligned}
$$
and, at the same time,
$$
\int_{\R^n} \left|\nabla u(x)\right|^{p(x)} dx \le \|\nabla u\|_{L^{p^-}\left(\R^n\right)}^{p^-}+ \|\nabla u\|_{L^{p^+}\left(\R^n\right)}^{p^+} ,
$$
so $u\in W^{1,p(\cdot)}\left(\R^n\right)$.
\end{remark}

\begin{theorem}[Anisotropic limit I]
\label{teorema4.3.1}
For all $u\in W^{1,p^{\pm}}\left(\R^n\right)$, we have the limit formula
$$
\begin{aligned}
\lim_{\delta\to0} \underset{\left|u(x)-u(y)\right|>\delta}{\int_{\R^n} \int_{\R^n}} \frac{\delta^{p(x)}}{|x-y|^{n+p(x)}} & \, dx \, dy = \int_{\R^n} K_{n,p(x)} \left|\nabla u(x) \right|^{p(x)} \, dx ,
\end{aligned}
$$
where $K_{n,p(x)}$ is defined as in \eqref{K(x)}. In particular, the limit exists and is finite.
\end{theorem}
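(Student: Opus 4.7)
My plan is to establish the formula first for $u\in C_c^\infty(\R^n)$ via pointwise plus dominated convergence in polar coordinates, and then extend to $W^{1,p^\pm}(\R^n)$ by density using a triangle-type inequality controlled by Lemma~\ref{lemma4.2}. Denote by $\Lambda_\delta(u)$ the functional appearing on the left-hand side of the claimed identity.

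\emph{Smooth case.} Using the polar-coordinate identity already derived in the proof of Lemma~\ref{lemma4.2}, I would write $\Lambda_\delta(u)=\int_{\s^{n-1}} G_\delta(\omega)\,d\mathcal H^{n-1}(\omega)$ with $G_\delta(\omega)=\int_{\R^n}\int_0^{+\infty}\chi_{\{|u(x+h\omega)-u(x)|>\delta\}}\,\delta^{p(x)}h^{-p(x)-1}\,dh\,dx$. Substituting $h=\delta t$ in the inner integral and exploiting the $C^1$ expansion of $u\in C_c^\infty$ at each $x$ produces, by dominated convergence in $t$, the pointwise limit
$$
\int_0^{+\infty}\chi_{\{|u(x+h\omega)-u(x)|>\delta\}}\,\frac{\delta^{p(x)}}{h^{p(x)+1}}\,dh\;\longrightarrow\;\frac{|\nabla u(x)\cdot\omega|^{p(x)}}{p(x)},\qquad x\in\R^n.
$$
Dominated convergence in $x$ is justified by the bound $|\mathcal M_\omega(\nabla u)(x)|^{p(x)}/p(x)$ extracted from the proof of Lemma~\ref{lemma4.2}, which is integrable because $\mathcal M_\omega(\nabla u)\in L^{p^-}\cap L^{p^+}$. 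Dominated convergence in $\omega$ is then justified by the uniform-in-$\omega$ constant supplied by Lemma~\ref{lemma4.2}. Finally, Fubini together with the rotational invariance of $\int_{\s^{n-1}}|\omega\cdot e|^{p(x)}\,d\mathcal H^{n-1}(\omega)$ and the definition~\eqref{K(x)} yield the identity for every $u\in C_c^\infty(\R^n)$.

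\emph{Approximation.} For $u\in W^{1,p^\pm}(\R^n)$, pick $u_k\in C_c^\infty(\R^n)$ with $u_k\to u$ in $W^{1,p^-}\cap W^{1,p^+}$. Applying the pointwise triangle inequality to $|u(x)-u(y)|$ and using the crude bounds $(1\pm\eps)^{p(x)}\le(1\pm\eps)^{p^\pm}$ and $\eps^{-p(x)}\le\eps^{-p^+}$ for $\eps\in(0,1)$ gives
$$
\Lambda_\delta(u)\le(1-\eps)^{-p^+}\Lambda_{(1-\eps)\delta}(u_k)+\eps^{-p^+}\Lambda_{\eps\delta}(u-u_k),
$$
together with the analogous bound obtained by swapping the roles of $u$ and $u_k$. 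Lemma~\ref{lemma4.2} applied to $u-u_k$ shows that $\Lambda_{\eps\delta}(u-u_k)\to 0$ as $k\to\infty$, uniformly in $\delta$, while the smooth case controls the limit $\delta\to 0$ of the $u_k$-term. Letting $\delta\to 0$, then $k\to\infty$, then $\eps\to 0$ — and combining with the modular convergence $\int_{\R^n} K_{n,p(x)}|\nabla u_k|^{p(x)}\,dx\to\int_{\R^n} K_{n,p(x)}|\nabla u|^{p(x)}\,dx$ inherited from $\nabla u_k\to\nabla u$ in $L^{p^\pm}$ via the usual $p^\pm$-sandwich — produces matching $\limsup$ and $\liminf$ estimates for $\Lambda_\delta(u)$.

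\emph{Main obstacle.} The delicate point is the variable-exponent bookkeeping in the triangle-type inequality: because the numerator is $\delta^{p(x)}$ rather than $\delta^p$, factors such as $(1\pm\eps)^{p(x)}$ and $\eps^{-p(x)}$ cannot be pulled out of the integral and have to be sandwiched uniformly through $p^\pm$. A milder analogue of this issue appears already in the pointwise step, where the indicator $\chi_{\{|u(x+\delta t\omega)-u(x)|/\delta>1\}}\,t^{-p(x)-1}$ must be majorised in $t$ independently of $\delta$ and $p(x)$.
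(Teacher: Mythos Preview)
Your argument is correct, but it takes a longer detour than the paper's. The paper proves the theorem \emph{directly} for every $u\in W^{1,p^\pm}(\R^n)$, with no smooth-case/density splitting. After the same change of variables $y-x=\delta h\omega$ you perform, the paper sets
\[
A(\delta)=\Bigl\{(x,h):\Bigl|\tfrac{u(x+\delta h\omega)-u(x)}{\delta h}\Bigr|h>1\Bigr\},\qquad
B=\bigl\{(x,h):\mathcal M_\omega(\nabla u)(x)\,h>1\bigr\},
\]
observes $A(\delta)\subset B$ for all $\delta$, and notes that $\chi_B(x,h)\,h^{-p(x)-1}$ is integrable in $(x,h)$ because $\int_{\R^n}\tfrac{1}{p(x)}|\mathcal M_\omega(\nabla u)|^{p(x)}\,dx<+\infty$ (this is the inequality you quote from Lemma~\ref{lemma4.2}). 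Since difference quotients of a Sobolev function converge a.e.\ to the directional derivative, $\chi_{A(\delta)}\to\chi_A$ a.e., and one application of dominated convergence in $(x,h)$ followed by integration over $\s^{n-1}$ finishes the proof.

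In other words, the pointwise step you carry out for $C_c^\infty$ works verbatim for any $u\in W^{1,p^\pm}(\R^n)$: absolute continuity on lines gives $\frac{u(x+\delta h\omega)-u(x)}{\delta h}\to\nabla u(x)\cdot\omega$ for a.e.\ $x$, and your majorant $|\mathcal M_\omega(\nabla u)|^{p(x)}/p(x)$ is already available. Your triangle-type inequality and the $p^\pm$-sandwich bookkeeping are sound, but they are doing work that is unnecessary here; the density argument is the standard route in Nguyen's constant-exponent papers precisely because there the limit is first established only under extra regularity, whereas in the present setting Lemma~\ref{lemma4.2} already supplies the domination for the whole class $W^{1,p^\pm}(\R^n)$. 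What your route buys is robustness --- the same scheme would transfer to contexts where the maximal-function domination is only known for smooth data --- at the cost of a second, more delicate limit passage.
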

\begin{proof}
First of all, taken $h>0$, let's prove that, for all $\omega \in \mathbb{S}^{n-1}$, we have
\begin{equation}
\label{eq:dadimnuova}
\sup_{\delta\in (0,1)}\underset{\left|\frac{u(x+\delta h\omega)-u(x)}{\delta h}\right|h>1}{\int_{\R^n}\int_0^{+\infty}} \frac{1}{h^{p(x)+1}} \, dh\,dx <+\infty , 
\end{equation}
and
\begin{equation}
\label{eq:dadim2nuova}
\lim_{\delta\to0} \underset{\left|\frac{u(x+\delta h\omega)-u(x)}{\delta h}\right|h>1}{\int_{\R^n}\int_0^{+\infty}} \frac{1}{h^{p(x)+1}} \, dh\,dx = \int_{\R^n} \frac{1}{p(x)}\left|\nabla u(x)\cdot \omega \right|^{p(x)} \, dx .
\end{equation}
Since $u\in W^{1,p^{\pm}}\left(\R\right)$,
$$
u(x+h\omega)-u(x) = \int_{0}^{h} \frac{d}{ds} u(x+s\omega)\,ds = \int_{0}^{h} \left(\nabla u(x+s\omega)\cdot\omega\right)\,ds
$$
for all $(x,h)\in\R^n\times(0,+\infty)$.
Let's set
$$
\begin{aligned}
A(\delta)&:=\left\{(x,h)\in\R^n\times(0,+\infty) : \left|\frac{u(x+\delta h \omega)-u(x)}{\delta h}\right|h>1\right\} , \\
A&:=\left\{(x,h)\in\R^n\times(0,+\infty) : \Biggl|\left. \frac{d}{ds}u(x+s\omega)\right|_{s=0} \Biggr|h>1\right\} , \\
B&:=\Bigl\{(x,h)\in\R^n\times(0,+\infty) : \mathcal{M}_\omega\left(\nabla u\right)(x) \, h>1\Bigr\} ,
\end{aligned}
$$
and let $\chi_K\left(x,h\right)$ be the characteristic function of a set $K\subset \R^n\times(0,+\infty)$. 
By the definition of maximal function, we have the inequalities chain

$$
\begin{aligned}
\mathcal{M}_\omega \left(\nabla u\right)(x) & = \sup_{h>0} \, \frac{1}{h} \int_{0}^{h} \left|\nabla u(x+s\omega) \right| \, ds \\
& \ge \sup_{h>0} \, \frac{1}{h}\left| \int_{0}^{h} \frac{d}{ds} u(x+s\omega) \, ds \right| \\
& = \sup_{h>0} \, \frac{1}{h}\left| u(x+h\omega)-u(x)\right| \ge \left|\frac{u(x+\delta h\omega)-u(x)}{\delta h}\right| , \quad \forall \delta >0 ,
\end{aligned}
$$
so that
$$
A(\delta)\subset B \Longrightarrow \chi_{A(\delta)}\left(x,h\right)\le\chi_{B}\left(x,h\right), \qquad \forall \left(x,h\right)\in\R^n\times(0,+\infty).
$$
Since
$$
\int_{\R^n} \int_0^{+\infty} \frac{1}{h^{p(x)+1}} \chi_{B}\left(x,h\right)\,dh\,dx = \int_{\R^n} \frac{1}{p(x)}\left|\mathcal{M}_\omega\left(\nabla u\right)(x)\right|^{p(x)} \, dx ,
$$
from what observed in the proof of the previous Lemma, the right hand side is finite, so we have
$$
\int_{\R^n} \int_0^{+\infty} \frac{1}{h^{p(x)+1}} \chi_{A\left(\delta\right)} \left(x,h\right) \,dh\,dx 
\le \int_{\R^n} \int_0^{+\infty} \frac{1}{h^{p(x)+1}} \chi_{B}\left(x,h\right)\,dh\,dx<+\infty ,
$$
as well as \eqref{eq:dadimnuova}. 
The function $\dfrac{\chi_{A\left(\delta\right)}\left(x,h\right)}{h^{p(x)+1}}$ is dominated by $\dfrac{\chi_{B}\left(x,h\right)}{h^{p(x)+1}}$, that is summable. Since
$$
\lim_{\delta \to 0} \chi_{A(\delta)}\left(x,h\right) =  \chi_{A}\left(x,h\right)
$$
for a.e. $\left(x,h\right)\in\R^n\times\R\times(0,+\infty)$, from dominated convergence, it follows 
$$
\begin{aligned}
\lim_{\delta\to0} & \underset{\left|\frac{u(x+\delta h\omega)-u(x)}{\delta h}\right|h>1}{\int_{\R^n}\int_0^{+\infty}} \frac{1}{h^{p(x)+1}} \, dh\,dx = \int_{\R^n} \int_0^{+\infty} \frac{1}{h^{p(x)+1}} \chi_{A}\left(x,h\right)\,dh\,dx \\
&= \int_{\R^n} \frac{1}{p(x)}\Biggl|\left. \frac{d}{ds}u(x+s\omega)\right|_{s=0} \Biggr|^{p(x)} \, dx = \int_{\R^n} \frac{1}{p(x)} \left|\nabla u(x)\cdot \omega\right|^{p(x)} \, dx.
\end{aligned}
$$
So, since \eqref{eq:dadim2nuova} holds, we are ready to prove the Lemma. 
Making the change of variable $y-x=\delta h \omega$, with $\omega\in\mathbb{S}^{n-1}$, we have
$$
\begin{aligned}
\underset{\left|u(x)-u(y)\right|>\delta}{\int_{\R^n} \int_{\R^n}} \frac{\delta^{p(x)}}{|x-y|^{n+p(x)}} \, dx \, dy & = \underset{\left|\frac{u(x+\delta h\omega)-u(x)}{\delta h} \right|h>1}{\int_{\R^n} \int_{\mathbb{S}^{n-1}}\int_0^{+\infty}} \frac{\delta^{p(x)}}{|\delta h |^{n+p(x)}} \,\delta^n h^{n-1}\, dh\,d\mathcal{H}^{n-1}(\omega)\,dx \\
& =\underset{\left|\frac{u(x+\delta h\omega)-u(x)}{\delta h} \right|h>1}{\int_{\R^n} \int_{\mathbb{S}^{n-1}}\int_0^{+\infty}} \frac{1}{| h |^{p(x)+1}} \, dh\,d\mathcal{H}^{n-1}(\omega)\,dx ,
\end{aligned}
$$
so, by using dominated convergence theorem and equation \eqref{eq:dadim2nuova}, it follows
$$
\begin{aligned}
\lim_{\delta\to0}\underset{\left|u(x)-u(y)\right|>\delta}{\int_{\R^n} \int_{\R^n}} \frac{\delta^{p(x)}}{|x-y|^{n+p}} \, dx \, dy &= \lim_{\delta\to 0}\underset{\left|\frac{u(x+\delta h\omega)-u(x)}{\delta h} \right|h>1}{\int_{\mathbb{S}^{n-1}} \int_{\R^n}\int_0^{+\infty}} \frac{1}{h^{p(x)+1}} \,dh\,dx\,d\mathcal{H}^{n-1}(\omega)\\
& = \int_{\mathbb{S}^{n-1}} \int_{\R^n} \frac{1}{p(x)} \left|\nabla u(x)\cdot \omega \right|^{p(x)} \, dx\,d\mathcal{H}^{n-1}(\omega) .
\end{aligned}
$$
Since, for every $V\in\R^n$ and for all $p\ge1$, we have
$$
\int_{\mathbb{S}^{n-1}} \left|V\cdot\omega\right|^p \, d\mathcal{H}^{n-1}(\omega) = p\,K_{n,p} \, |V|^p ,
$$
where $K_{n,p}$ is defined as in \eqref{K}, then, for every $V\in\R^n$ and for all $x\in\R^n$, we have
$$
\int_{\mathbb{S}^{n-1}} \left|V\cdot\omega\right|^{p(x)} \, d\mathcal{H}^{n-1}(\omega) = p(x)\,K_{n,p(x)} \, |V|^{p(x)} .
$$
As a consequence, we are able to obtain the Nguyen type limit formula
$$
\lim_{\delta\to0} \underset{\left|u(x)-u(y)\right|>\delta}{\int_{\R^n} \int_{\R^n}} \frac{\delta^{p(x)}}{|x-y|^{n+p(x)}} \, dx \, dy = \int_{\R^n} K_{n,p(x)} \left|\nabla u(x) \right|^{p(x)} \, dx .
$$
Observe that, by the definition of $K_{n,p(x)}$, we have
$$
\begin{aligned}
\lim_{\delta\to0} \underset{\left|u(x)-u(y)\right|>\delta}{\int_{\R^n} \int_{\R^n}} \frac{\delta^{p(x)}}{|x-y|^{n+p(x)}} \, dx \, dy & = \int_{\R^n} \frac{1}{p(x)} \left(\int_{\mathbb{S}^{n-1}}{\left|\omega\cdot\boldsymbol{e}\right|^{p(x)}}\,d\mathcal{H}^{n-1}(\omega)\right)\left|\nabla u(x) \right|^{p(x)} \, dx \\
& \le \frac{1}{p^{-}} \int_{\mathbb{S}^{n-1}}{\left|\omega\cdot\boldsymbol{e}\right|^{p^-}}\,d\mathcal{H}^{n-1}(\omega)\int_{\R^n} \left|\nabla u(x) \right|^{p(x)} \, dx \\
& = K_{n,p^-}\int_{\R^n} \left|\nabla u(x) \right|^{p(x)} \, dx,
\end{aligned}
$$
so that the limit is finite.
\end{proof}


\begin{remark}
The function $\left\{x\mapsto K_{n,p(x)}\right\}$ is also bounded, since $|K_{n,p(x)}|\le K_{n,p^-}$.
Moreover $\left\{s\mapsto K_{n,s}\right\}$, for $s\in[1,+\infty)$, is a monotonically decreasing function, that tends to $0$ as $s\to+\infty$. 
In fact, we have
$$
\begin{aligned}
\frac{d}{ds}\left[\frac{1}{s} \int_{\s^{n-1}} |\omega\cdot\boldsymbol{e}|^s\,d\mathcal{H}^{n-1}(\omega)\right] 
& = -\frac{1}{s^2} \int_{\s^{n-1}} |\omega\cdot\boldsymbol{e}|^s\,d\mathcal{H}^{n-1}(\omega) \\
& \quad + \frac{1}{s} \int_{\s^{n-1}} |\omega\cdot\boldsymbol{e}|^{s} \log{|\omega\cdot\boldsymbol{e}|}\,d\mathcal{H}^{n-1}(\omega) < 0 , \qquad  s\geq 1 ,
\end{aligned}
$$
and the assertions follow. Hence, roughy speaking, very large anisotropic exponents $p(x)$ will produce,
in some sense, a small measure  $\mu=K_{n,p(x)}{\mathcal L}^n$ in the limit formula.
\end{remark}

In the classical case, where $p$ is a constant exponent, if we have
$$
\lim_{\delta\to0} \underset{\left|u(x)-u(y)\right|>\delta}{\int_{\R^n} \int_{\R^n}} \frac{\delta^p}{|x-y|^{n+p}} \, dx \, dy = K_{n,p} \int_{\R^n} \left|\nabla u(x) \right|^p \, dx ,
$$
then we have also
$$
\lim_{\delta\to0} \underset{\left|u(x)-u(y)\right|>\delta}{\int_{\R^n} \int_{\R^n}} \frac{p\,\delta^p}{|x-y|^{n+p}} \, dx \, dy = p\, K_{n,p} \int_{\R^n} \left|\nabla u(x) \right|^p \, dx .
$$
This fact is not obvious in the variable exponent case.

\begin{theorem}[Anisotropic limit II]
\label{teorema4.3.2}
For all $u\in W^{1,p^{\pm}}\left(\R^n\right)$, we have the limit formula
$$
\begin{aligned}
\lim_{\delta\to0} \underset{\left|u(x)-u(y)\right|>\delta}{\int_{\R^n} \int_{\R^n}} \frac{p(x)\,\delta^{p(x)}}{|x-y|^{n+p(x)}} & \, dx \, dy = \int_{\R^n} p(x)\,K_{n,p(x)} \left|\nabla u(x) \right|^{p(x)} \, dx ,
\end{aligned}
$$
where $K_{n,p(x)}$ has been introduced previously. In particular, the limit exists and is finite.
\end{theorem}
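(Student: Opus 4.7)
The plan is to mirror the argument of Theorem \ref{teorema4.3.1} very closely, keeping the factor $p(x)$ inside the integrand rather than trying to pull it out. After the same polar change of variables $y-x=\delta h\omega$, the quantity under study becomes
\[
\int_{\s^{n-1}}\int_{\R^n}\int_0^{+\infty}\frac{p(x)}{h^{p(x)+1}}\,\chi_{A(\delta)}(x,h)\,dh\,dx\,d\mathcal{H}^{n-1}(\omega),
\]
where $A(\delta)=\{(x,h):|u(x+\delta h\omega)-u(x)|>\delta\}$ as before. The key algebraic observation is that the $p(x)$ in the numerator exactly cancels the $1/p(x)$ produced when integrating $h^{-p(x)-1}$; concretely, on the enveloping set $B=\{(x,h):\mathcal{M}_\omega(\nabla u)(x)\,h>1\}$ one has
\[
\int_0^{+\infty}\frac{p(x)}{h^{p(x)+1}}\chi_B(x,h)\,dh=|\mathcal{M}_\omega(\nabla u)(x)|^{p(x)}.
\]

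Next I would reuse the inclusion $A(\delta)\subset B$ established in Lemma \ref{lemma4.2} via the fundamental theorem of calculus and the definition of the directional maximal function. The dominating function $|\mathcal{M}_\omega(\nabla u)|^{p(x)}$ is integrable uniformly in $\omega\in\s^{n-1}$: split $\R^n$ according to $\mathcal{M}_\omega(\nabla u)\le 1$ versus $\mathcal{M}_\omega(\nabla u)>1$, estimate by $|\mathcal{M}_\omega(\nabla u)|^{p^-}$ and $|\mathcal{M}_\omega(\nabla u)|^{p^+}$ respectively, and apply the universal (and $\omega$-independent) $L^{p^\pm}$-boundedness of $\mathcal{M}_\omega$ recalled just before Theorem \ref{teomax}. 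Since the factor $p(x)$ has already been absorbed into the antiderivative, no further control is needed beyond what Lemma \ref{lemma4.2} already provides.

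Then, for each fixed $\omega$, dominated convergence together with $\chi_{A(\delta)}(x,h)\to\chi_A(x,h)$ a.e.\ (where $A=\{(x,h):|\nabla u(x)\cdot\omega|\,h>1\}$) yields
\[
\lim_{\delta\to 0}\int_{\R^n}\int_0^{+\infty}\frac{p(x)}{h^{p(x)+1}}\chi_{A(\delta)}(x,h)\,dh\,dx=\int_{\R^n}|\nabla u(x)\cdot\omega|^{p(x)}\,dx,
\]
and a second application of dominated convergence on the compact sphere, using the $\omega$-uniform bound, lets me pull the $\delta$-limit through the $\omega$-integral. The final step is to invoke the pointwise identity $\int_{\s^{n-1}}|V\cdot\omega|^{p(x)}\,d\mathcal{H}^{n-1}(\omega)=p(x)\,K_{n,p(x)}\,|V|^{p(x)}$ with $V=\nabla u(x)$, producing the claimed formula, with finiteness ensured by the bound $p(x)K_{n,p(x)}\le p^+ K_{n,p^-}$. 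I do not expect a serious obstacle beyond the one already addressed in Theorem \ref{teorema4.3.1}: the variable character of $p$ forbids extracting $p(x)$ as a constant multiplier, but the cancellation with $1/p(x)$ inside the $h$-integral handles this cleanly, which is precisely the reason the assertion is not a direct corollary of Theorem \ref{teorema4.3.1}.
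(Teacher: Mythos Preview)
Your proposal is correct and follows essentially the same route as the paper: the same polar change $y-x=\delta h\omega$, the same inclusion $A(\delta)\subset B$ inherited from Lemma~\ref{lemma4.2}, the same dominating function $p(x)h^{-p(x)-1}\chi_B$ integrating to $|\mathcal{M}_\omega(\nabla u)|^{p(x)}$, dominated convergence in $(x,h)$ and then in $\omega$, and the spherical identity for $\int_{\s^{n-1}}|V\cdot\omega|^{p(x)}\,d\mathcal{H}^{n-1}(\omega)$. The paper presents the uniform bound \eqref{eq:dadimnuova2} separately by first estimating $p(x)\le p^+$ and invoking \eqref{eq:dadimnuova}, whereas you fold this into the domination step directly, but the substance is identical.
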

\begin{proof}
In analogy to the previous case, taken $h>0$, we prove that, for all $\omega \in \mathbb{S}^{n-1}$,
\begin{equation}
\label{eq:dadimnuova2}
\sup_{\delta\in (0,1)}\underset{\left|\frac{u(x+\delta h\omega)-u(x)}{\delta h}\right|h>1}{\int_{\R^n}\int_0^{+\infty}} \frac{p(x)}{h^{p(x)+1}} \, dh\,dx <+\infty
\end{equation}
and
\begin{equation}
\label{eq:dadim2nuova2}
\lim_{\delta\to0} \underset{\left|\frac{u(x+\delta h\omega)-u(x)}{\delta h}\right|h>1}{\int_{\R^n}\int_0^{+\infty}} \frac{p(x)}{h^{p(x)+1}} \, dh\,dx = \int_{\R^n} \left|\nabla u(x)\cdot \omega \right|^{p(x)} \, dx .
\end{equation}
Of course, we have the inequality
$$
\underset{\left|\frac{u(x+\delta h\omega)-u(x)}{\delta h}\right|h>1}{\int_{\R^n}\int_0^{+\infty}} \frac{p(x)}{h^{p(x)+1}} \, dh\,dx \le p^+ \underset{\left|\frac{u(x+\delta h\omega)-u(x)}{\delta h}\right|h>1}{\int_{\R^n}\int_0^{+\infty}} \frac{1}{h^{p(x)+1}} \, dh\,dx
$$
and, by what previously proved, \eqref{eq:dadimnuova2} holds.
Now, taking into account the equations obtained for the first anisotropic limit formula, we have
$$
\int_{\R^n} \int_0^{+\infty} \frac{p(x)}{h^{p(x)+1}} \chi_{B}\left(x,h\right)\,dh\,dx = \int_{\R^n} \left|\mathcal{M}_\omega\left(\nabla u\right)(x)\right|^{p(x)} \, dx,
$$
where the right-hand side is convergent, as proved in Lemma \ref{lemma4.2}. 
From $A(\delta)\subset B$, we get
$$
\begin{aligned}
\frac{p(x)}{h^{p(x)+1}} \chi_{A\left(\delta\right)}\left(x,h\right) \le \frac{p(x)}{h^{p(x)+1}} \chi_{B}\left(x,h\right) , \qquad \forall (x,h)\in\R^n\times(0,+\infty) ,\quad\forall \delta>0,
\end{aligned}
$$
where
$p(x)h^{-p(x)-1} \chi_{B}\left(x,h\right)$ is summable. 
Then, by the dominated convergence theorem, 
$$
\begin{aligned}
\lim_{\delta\to0} \underset{\left|\frac{u(x+\delta h\omega)-u(x)}{\delta h}\right|h>1}{\int_{\R^n}\int_0^{+\infty}} \frac{p(x)}{h^{p(x)+1}} \, dh\,dx & = \int_{\R^n} \int_0^{+\infty} \frac{p(x)}{h^{p(x)+1}} \chi_{A}\left(x,h\right)\,dh\,dx = \int_{\R^n} \left|\nabla u(x)\cdot \omega\right|^{p(x)} \, dx ,
\end{aligned}
$$
proving \eqref{eq:dadim2nuova2}. Now we are ready to prove the theorem. 
Setting $y-x=\delta h \omega$, with $\omega\in\mathbb{S}^{n-1}$, 
$$
\underset{\left|u(x)-u(y)\right|>\delta}{\int_{\R^n} \int_{\R^n}} \frac{p(x)\,\delta^{p(x)}}{|x-y|^{n+p(x)}} \, dx \, dy = \underset{\left|\frac{u(x+\delta h\omega)-u(x)}{\delta h} \right|h>1}{\int_{\R^n} \int_{\mathbb{S}^{n-1}}\int_0^{+\infty}} \frac{p(x)}{h^{p(x)+1}} \, dh\,d\mathcal{H}^{n-1}(\omega)\,dx ,
$$
so, making the limit as $\delta\to0$, it results
$$
\begin{aligned}
\lim_{\delta\to0}\underset{\left|u(x)-u(y)\right|>\delta}{\int_{\R^n} \int_{\R^n}} \frac{p(x)\,\delta^{p(x)}}{|x-y|^{n+p}} \, dx \, dy & = \lim_{\delta\to 0}\underset{\left|\frac{u(x+\delta h\omega)-u(x)}{\delta h} \right|h>1}{\int_{\mathbb{S}^{n-1}} \int_{\R^n}\int_0^{+\infty}} \frac{p(x)}{h^{p(x)+1}} \, dh\,dx\,d\mathcal{H}^{n-1}(\omega) \\
& = \int_{\mathbb{S}^{n-1}} \int_{\R^n} \left|\nabla u(x)\cdot \omega \right|^{p(x)} \, dx\,d\mathcal{H}^{n-1}(\omega) .
\end{aligned}
$$
In conclusion, applying Fubini-Tonelli's theorem and recalling that, for any $V\in\R^n$ and  $x\in\R^n$
$$
\int_{\mathbb{S}^{n-1}} \left|V\cdot\omega\right|^{p(x)} \, d\mathcal{H}^{n-1}(\omega) = p(x)\,K_{n,p(x)} \, |V|^{p(x)} ,
$$
we get
$$
\lim_{\delta\to0} \underset{\left|u(x)-u(y)\right|>\delta}{\int_{\R^n} \int_{\R^n}} \frac{p(x)\,\delta^{p(x)}}{|x-y|^{n+p(x)}} \, dx \, dy = \int_{\R^n} p(x)\, K_{n,p(x)} \left|\nabla u(x) \right|^{p(x)} \, dx .
$$
Of course the right-hand side is finite,
so that the limit is also finite. The proof is complete.
\end{proof}

\section{Sufficient conditions}

First we state the following

\begin{lemma}
\label{lemma4.4}
Let $u\in L^{p(\cdot)}\left(\R^n\right)\cap C^2\left(\R^n\right)$. Then we have
$$
\int_{\R^n} p(x)\,K_{n,p(x)} \left|\nabla u(x)\right|^{p(x)} \, dx \le \liminf_{\varepsilon\to0} \int_{\R^n} \int_{\R^n} \frac{\eps\left|u(x)-u(y)\right|^{p(x)+\eps}}{|x-y|^{n+p(x)}} \, dx \, dy .
$$
Moreover, if $u$ satisfies
$$
C(u) := \sup_{0<\eps<1} \int_{\R^n} \int_{\R^n} \frac{\eps\left|u(x)-u(y)\right|^{p(x)+\eps}}{|x-y|^{n+p(x)}}\,dx \, dy <+\infty ,
$$
then $u\in W^{1,p(\cdot)}\left(\R^n\right)$.
\end{lemma}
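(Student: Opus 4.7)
The plan is to pass to polar coordinates, exploit the concentration of the family of measures $\eps\,h^{\eps-1}\,dh$ at the origin as $\eps \to 0^+$, and then apply Fatou's lemma. Setting $y = x + h\omega$ with $h > 0$ and $\omega \in \Sf$, I would rewrite
$$
\int_{\R^n}\int_{\R^n} \frac{\eps\,|u(x)-u(y)|^{p(x)+\eps}}{|x-y|^{n+p(x)}}\,dx\,dy = \int_\Sf \int_{\R^n} G_\eps(x,\omega)\,dx\,d\mathcal{H}^{n-1}(\omega),
$$
where
$$
G_\eps(x,\omega) := \int_0^{+\infty} \frac{\eps\,|u(x+h\omega) - u(x)|^{p(x)+\eps}}{h^{p(x)+1}}\,dh.
$$

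The key step is a pointwise lower bound on $G_\eps$. Fix $x \in \R^n$ and $\omega \in \Sf$; since $u \in C^2$, for every $\eta > 0$ there exists $\delta_0 = \delta_0(x,\omega,\eta) > 0$ such that $|u(x+h\omega) - u(x) - h\,\nabla u(x)\cdot\omega| \le \eta\,h$ for all $h \in (0,\delta_0]$, whence $|u(x+h\omega) - u(x)| \ge h\,(|\nabla u(x)\cdot\omega| - \eta)_+$. Dropping the nonnegative tail $h > \delta_0$,
$$
G_\eps(x,\omega) \ge \bigl(|\nabla u(x)\cdot\omega| - \eta\bigr)_{+}^{p(x)+\eps}\int_0^{\delta_0} \eps\,h^{\eps-1}\,dh = \delta_0^{\eps}\,\bigl(|\nabla u(x)\cdot\omega| - \eta\bigr)_{+}^{p(x)+\eps}.
$$
Since $\delta_0^{\eps} \to 1$ as $\eps \to 0^+$, letting first $\eps \to 0^+$ and then $\eta \to 0^+$ yields
$$
\liminf_{\eps \to 0^+} G_\eps(x,\omega) \ge |\nabla u(x)\cdot\omega|^{p(x)}
$$
for every such $(x,\omega)$ (the case $\nabla u(x)\cdot\omega = 0$ being trivial). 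Applying Fatou's lemma on $\R^n \times \Sf$ and invoking the spherical identity $\int_\Sf |V\cdot\omega|^{p(x)}\,d\mathcal{H}^{n-1}(\omega) = p(x)\,K_{n,p(x)}\,|V|^{p(x)}$ used in the proof of Theorem~\ref{teorema4.3.1} with $V = \nabla u(x)$ gives the first asserted inequality.

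For the second assertion, I would estimate $p(x)\,K_{n,p(x)}$ from below: since
$$
p(x)\,K_{n,p(x)} = \int_\Sf |\omega\cdot\boldsymbol{e}|^{p(x)}\,d\mathcal{H}^{n-1}(\omega) \ge 2^{-p^+}\,\mathcal{H}^{n-1}\bigl(\{\omega\in\Sf : |\omega\cdot\boldsymbol{e}|\ge 1/2\}\bigr) =: c > 0,
$$
the first assertion combined with $C(u) < +\infty$ yields $\int_{\R^n} |\nabla u(x)|^{p(x)}\,dx \le C(u)/c < +\infty$, hence $\nabla u \in L^{p(\cdot)}(\R^n)$ and, together with the hypothesis $u \in L^{p(\cdot)}(\R^n)$, this gives $u \in W^{1,p(\cdot)}(\R^n)$. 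The principal delicacy of the argument lies in the pointwise liminf step: the elementary identity $\int_0^{\delta_0}\eps\,h^{\eps-1}\,dh = \delta_0^{\eps} \to 1$ is precisely what encodes the concentration of $\eps\,h^{\eps-1}\,dh$ at $0$ and allows the lower envelope to recover the correct value $|\nabla u\cdot\omega|^{p(x)}$, while the remaining steps are standard measure-theoretic bookkeeping.
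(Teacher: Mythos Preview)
Your proof is correct and follows the same overall strategy as the paper: rewrite the double integral in polar coordinates, exploit the concentration of $\eps\,h^{\eps-1}\,dh$ at $h=0$ together with the first-order Taylor expansion of $u$, and conclude via Fatou's lemma and the spherical identity $\int_{\Sf}|V\cdot\omega|^{p(x)}\,d\mathcal{H}^{n-1}(\omega)=p(x)K_{n,p(x)}|V|^{p(x)}$.

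The only difference is organizational. The paper first localizes to a ball $B_A$ so as to use a \emph{uniform} second-order remainder bound $|Du(x)\cdot r\omega|^{p(x)+\eps}\le|u(x+r\omega)-u(x)|^{p(x)+\eps}+C\,r^{p(x)+\eps+1}$ on $\Sf\times B_A\times(0,1)$; after integration the error term contributes $C\eps/(\eps+1)|B_A|\mathcal{H}^{n-1}(\Sf)\to0$, Fatou is applied on $B_A$, and finally $A\to\infty$. You instead establish the pointwise bound $\liminf_{\eps\to0}G_\eps(x,\omega)\ge|\nabla u(x)\cdot\omega|^{p(x)}$ directly (using only $C^1$ regularity, via an $\eta$-dependent $\delta_0(x,\omega,\eta)$) and then apply Fatou once on all of $\R^n\times\Sf$; this is slightly more economical and avoids the localization step. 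For the second assertion the paper bounds $p(x)K_{n,p(x)}$ below by $p^+K_{n,p^+}$ (monotonicity of $s\mapsto\int_{\Sf}|\omega\cdot\boldsymbol{e}|^s\,d\mathcal{H}^{n-1}$), while your spherical-cap bound $p(x)K_{n,p(x)}\ge 2^{-p^+}\mathcal{H}^{n-1}(\{|\omega\cdot\boldsymbol{e}|\ge 1/2\})$ achieves the same thing; either way one gets $\int_{\R^n}|\nabla u|^{p(x)}\,dx<\infty$ and hence $u\in W^{1,p(\cdot)}(\R^n)$.
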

\begin{proof}
By using polar coordinates, we get
$$
C(u) = \sup_{0<\eps<1} \int_{\mathbb{S}^{n-1}} \int_{\R^n}\int_0^{+\infty} \frac{\eps\left|u(x+r\omega)-u(x)\right|^{p(x)+\eps}}{r^{p(x)+1}}\,dr\,dx\,d\mathcal{H}^{n-1}(\omega).
$$
Consider the restriction to the open balls $B_A\subset \R^n$ at the origin with radius $A>0$,
$$
\sup_{0<\eps<1} \int_{\mathbb{S}^{n-1}} \int_{B_A}\int_0^{+\infty} \frac{\eps\left|u(x+r\omega)-u(x)\right|^{p(x)+\eps}}{r^{p(x)+1}}\,dr\,dx\,d\mathcal{H}^{n-1}(\omega) \le C(u) .
$$
Since $u\in C^2\left(\R^n\right)$, arguing as in the proof of \cite[Lemma 4]{nguyen06}, 
we have
$$
\left|Du(x)\cdot r\omega\right|^{p(x)+\eps} \le |u(x+r\omega)-u(x)|^{p(x)+\eps} + Cr^{p(x)+\eps+1} , \qquad \forall\,(\omega,x,r)\in\mathbb{S}^{n-1}\times B_A \times(0,1) .
$$
Multiplying by $\eps$ and dividing by $r^{p(x)+1}$, after integrating, we get
$$
\begin{aligned}
\liminf_{\varepsilon\to0} \int_{\s^{n-1}}\int_{B_A}\int_0^1 & \frac{ \eps\left|Du(x)\cdot r\omega\right|^{p(x)+\eps}}{r^{p(x)+1}}\, dr\,dx\,d\mathcal{H}^{n-1}(\omega) \\
 &\le \liminf_{\eps\to0} \int_{\s^{n-1}}\int_{B_A}\int_0^1\frac{\eps\,|u(x+r\omega)-u(x)|^{p(x)+\eps}}{r^{p(x)+1}}\,dr\,dx\,d\mathcal{H}^{n-1}(\omega) \\ 
&\quad+ C\lim_{\eps\to0} \int_{\s^{n-1}}\int_{B_A}\int_0^1 \eps \, r^{\eps}\,dr\,dx\,d\mathcal{H}^{n-1}(\omega) \\
& = \liminf_{\eps\to0} \int_{\s^{n-1}}\int_{B_A}\int_0^1\frac{\eps\,|u(x+r\omega)-u(x)|^{p(x)+\eps}}{r^{p(x)+1}}\,dr\,dx\,d\mathcal{H}^{n-1}(\omega).
\end{aligned}
$$
After some computation, we are able to apply Fatou's Lemma as shown in the following equation
$$
\begin{aligned}
\liminf_{\varepsilon\to0} \int_{\s^{n-1}}\int_{B_A}\int_0^1 & \frac{ \eps\left|Du(x)\cdot r \omega\right|^{p(x)+\eps}}{r^{p(x)+1}}\, dr\,dx\,d\mathcal{H}^{n-1}(\omega)\\
& = \liminf_{\varepsilon\to0} \int_{B_A}\left(\int_0^1 \eps \, r^{\eps-1}\,dr \int_{\s^{n-1}} \left|Du(x)\cdot \omega\right|^{p(x)+\eps}\, d\mathcal{H}^{n-1}(\omega)\right)dx \\
& = \liminf_{\varepsilon\to0} \int_{B_A}\left( \int_{\s^{n-1}} \left|Du(x)\cdot \omega\right|^{p(x)+\eps}\, d\mathcal{H}^{n-1}(\omega)\right)dx \\
& \ge \int_{B_A}\left( \int_{\s^{n-1}} \liminf_{\varepsilon\to0} \left|Du(x)\cdot \omega\right|^{p(x)+\eps}\, d\mathcal{H}^{n-1}(\omega)\right)dx \\
& = \int_{B_A}\left( \int_{\s^{n-1}} \left|Du(x)\cdot \omega\right|^{p(x)}\, d\mathcal{H}^{n-1}(\omega)\right)dx \\
& = \int_{B_A}p(x)\,K_{n,p(x)}\left|Du(x)\right|^{p(x)} \,dx ,
\end{aligned}
$$
hence the inequality
$$
\int_{B_A} p(x)\,K_{n,p(x)}\left|Du(x)\right|^{p(x)} \,dx \le \liminf_{\eps\to0} \int_{\s^{n-1}}\int_{B_A}\int_0^1\frac{\eps\,|u(x+r\omega)-u(x)|^{p(x)+\eps}}{r^{p(x)+1}}\,dr\,dx\,d\mathcal{H}^{n-1}(\omega) .
$$
By the arbitrariness of $A>0$, we conclude that  
$$
\int_{\R^n} p(x)\,K_{n,p(x)} \left|Du(x)\right|^{p(x)} \,dx \le \liminf_{\eps\to0} \int_{\R^n}\int_{\R^n} \frac{\eps\,|u(x)-u(y)|^{p(x)+\eps}}{|x-y|^{n+p(x)}}\,dx \, dy \le C(u) .
$$
Since $p^+\,K_{n,p^+} \leq p(x)\,K_{n,p(x)}$, if $C(u)$ is finite,
we get $u\in W^{1,p(\cdot)}\left(\R^n\right)$, concluding the proof.
\end{proof}

\begin{corollary}
Let $u\in L^{p(\cdot)}\left(\R^n\right)\cap C^2\left(\R^n\right)$ and set $\omega(x) = p(x)\,K_{n,p(x)}$. Then
\begin{equation}
\label{luxnorm}
\|\nabla u\|_{L^{p(\cdot)}\left(\R^n,\,\omega\right)}  \leq
\liminf_{\eps\to 0} \max_\pm \left(\int_{\R^n}\int_{\R^n}\frac{\eps\left|u(x)-u(y)\right|^{p(x)+\eps}}{|x-y|^{n+p(x)}}\,dx\,dy\right)^{1/p^\pm}
\end{equation}
and, if the right-hand side is finite, $u\in W^{1,p(\cdot)}\left(\R^n\right)$. In addition, if $p\in\mathcal{P}^{\log}\left(\R^n\right)$, we have $\mathcal{M}\left(\nabla u\right) \in L^{p(\cdot)}\left(\R^n\right)$.
\end{corollary}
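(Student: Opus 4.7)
The plan is to combine Lemma \ref{lemma4.4} with the modular-norm inequality \eqref{weight} applied in the weighted space $L^{p(\cdot)}(\R^n,\omega)$ with weight $\omega(x)=p(x)\,K_{n,p(x)}$. Writing
$$
I(\eps):=\int_{\R^n}\int_{\R^n}\frac{\eps\left|u(x)-u(y)\right|^{p(x)+\eps}}{|x-y|^{n+p(x)}}\,dx\,dy,
$$
Lemma \ref{lemma4.4} reads $\rho_{p(\cdot),\omega}(\nabla u)\le \liminf_{\eps\to 0}I(\eps)$. Inserting this into the upper bound of \eqref{weight} gives
$$
\|\nabla u\|_{L^{p(\cdot)}(\R^n,\omega)}\le\max\Bigl\{\rho_{p(\cdot),\omega}(\nabla u)^{1/p^-},\,\rho_{p(\cdot),\omega}(\nabla u)^{1/p^+}\Bigr\}.
$$

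The one step requiring attention is the commutation of $\max_\pm$ with $\liminf_\eps$ needed to reach the right-hand side of \eqref{luxnorm}, and this is the only point I would expect a careful reader to flag. Since each map $t\mapsto t^{1/p^\pm}$ is continuous and monotonically increasing on $[0,+\infty)$, one has $(\liminf_\eps I(\eps))^{1/p^\pm}\le\liminf_\eps I(\eps)^{1/p^\pm}$; moreover, the maximum of two liminfs is trivially bounded above by the liminf of the maximum (because the maximum dominates each coordinate). Chaining these inequalities with the bound displayed above yields exactly \eqref{luxnorm}.

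For the Sobolev membership when the right-hand side of \eqref{luxnorm} is finite, I would exploit the identity $sK_{n,s}=\int_{\s^{n-1}}|\sigma\cdot\boldsymbol{e}|^s\,d\mathcal{H}^{n-1}(\sigma)$, which is monotonically decreasing in $s\ge 1$ since $|\sigma\cdot\boldsymbol{e}|\le 1$ on the sphere (the same bound is already used at the end of Lemma \ref{lemma4.4}). Hence $\omega(x)\ge p^+K_{n,p^+}>0$ uniformly in $x$, so finiteness of $\|\nabla u\|_{L^{p(\cdot)}(\R^n,\omega)}$ forces $\rho_{p(\cdot)}(\nabla u)\le (p^+K_{n,p^+})^{-1}\rho_{p(\cdot),\omega}(\nabla u)<+\infty$, yielding $\nabla u\in L^{p(\cdot)}(\R^n)$; together with the hypothesis $u\in L^{p(\cdot)}(\R^n)$ this gives $u\in W^{1,p(\cdot)}(\R^n)$. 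The last assertion is then immediate from Theorem \ref{maxfunctiongen}: under the log-Hölder assumption on $p$ and the standing hypothesis $1<p^-\le p^+<+\infty$, the Hardy-Littlewood maximal operator is bounded on $L^{p(\cdot)}(\R^n)$, so $\nabla u\in L^{p(\cdot)}(\R^n)$ implies $\mathcal{M}(\nabla u)\in L^{p(\cdot)}(\R^n)$.
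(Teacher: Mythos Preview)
Your proof is correct and follows essentially the same route as the paper's: invoke Lemma~\ref{lemma4.4} to bound the weighted modular, apply \eqref{weight} to pass to the Luxemburg norm, commute the $\max_\pm$ with the $\liminf$ exactly as you do, then use the uniform lower bound $\omega(x)\ge p^+K_{n,p^+}$ to conclude $\nabla u\in L^{p(\cdot)}(\R^n)$, and finally appeal to Theorem~\ref{maxfunctiongen} for the maximal function claim. Your justification of the $\max$/$\liminf$ interchange is in fact more explicit than the paper's.
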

\begin{proof}
By Lemma \ref{lemma4.4}, we have
$$
\int_{\R^n} \left|\nabla u(x)\right|^{p(x)} \omega(x) \, dx \le \liminf_{\varepsilon\to0} \int_{\R^n} \int_{\R^n} \frac{\eps\left|u(x)-u(y)\right|^{p(x)+\eps}}{|x-y|^{n+p(x)}} \, dx \, dy ,
$$
while, remembering property \eqref{weight}, we have also
$$
\begin{aligned}
\|\nabla u\|_{L^{p(\cdot)}\left(\R^n,\,\omega\right)} 
&\le  \max_\pm\left(\int_{\R^n} \left|\nabla u(x)\right|^{p(x)} \omega(x) \, dx \right)^{\frac{1}{p^\pm}} \\
&\le \max_\pm \liminf_{\varepsilon\to0} \left(\int_{\R^n} \int_{\R^n} \frac{\eps\left|u(x)-u(y)\right|^{p(x)+\eps}}{|x-y|^{n+p(x)}} \right)^{\frac{1}{p^\pm}} \\
&\le \liminf_{\eps\to0} \max_\pm \left(\int_{\R^n}\int_{\R^n}\frac{\eps\left|u(x)-u(y)\right|^{p(x)+\eps}}{|x-y|^{n+p(x)}}\,dx\,dy\right)^{\frac{1}{p^\pm}} ,
\end{aligned}
$$
so the first assertion follows. Since $p^+\,K_{n,p^+} \leq p(x)\,K_{n,p(x)}$, if the last term is finite,
we get $u\in W^{1,p(\cdot)}\left(\R^n\right)$.
Moreover, if $p\in\mathcal{P}^{\log}\left(\R^n\right)$, by applying Theorem \ref{maxfunctiongen}, there exists a constant $K_{p^-}>0$ such that
$$
\left\|\mathcal{M}\left(\nabla u\right)\right\|_{L^{p(\cdot)}\left(\R^n\right)} \le K_{p^-} \left\|\nabla u\right\|_{L^{p(\cdot)}\left(\R^n\right)},
$$
and the proof is complete.
\end{proof}

We will need the following lemma.

\begin{lemma}
	\label{lemma4.1}
	Let $\Omega\subset\R^n$ be a measurable set and let $\psi$ and $\phi$ be two non-negative measurable functions on the domain $\Omega\times\Omega$. If we take a measurable function $\alpha : \Omega\to(-1,+\infty)$, then
	$$
	\int_0^1 \underset{\phi(x,y)>\delta}{\int\int} \delta^{\alpha(x)} \psi(x,y) \, dx\, dy\,d\delta =\underset{\phi(x,y)\le1}{\int\int} \frac{\phi^{\alpha(x)+1}(x,y)}{\alpha(x)+1}\psi(x,y)\,dx\, dy + \underset{\phi(x,y)>1}{\int\int} \frac{\psi(x,y)}{\alpha(x)+1} \,dx\, dy.
	$$
\end{lemma}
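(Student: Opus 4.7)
The statement is a Fubini-type identity, so the natural approach is to swap the order of integration and evaluate the inner integral in $\delta$ explicitly. Since $\psi$, $\phi$ are non-negative and the factor $\delta^{\alpha(x)}$ is non-negative on $(0,1)$, Tonelli's theorem applies with no integrability assumption, so there is no delicate exchange issue — the integral is well-defined (possibly $+\infty$) on both sides simultaneously.

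The plan is to rewrite the left-hand side, using the indicator $\chi_{\{\phi(x,y)>\delta\}} = \chi_{\{\delta<\phi(x,y)\}}$, as
\[
\int_\Omega\!\!\int_\Omega \psi(x,y)\left(\int_0^1 \delta^{\alpha(x)}\,\chi_{\{\delta<\phi(x,y)\}}\,d\delta\right)dx\,dy.
\]
For fixed $(x,y)$ the inner integral is $\int_0^{\min\{1,\phi(x,y)\}}\delta^{\alpha(x)}\,d\delta$. The hypothesis $\alpha(x)>-1$ ensures that the antiderivative $\delta^{\alpha(x)+1}/(\alpha(x)+1)$ is well defined and that the primitive vanishes at $0$, giving
\[
\int_0^{c}\delta^{\alpha(x)}\,d\delta \;=\; \frac{c^{\alpha(x)+1}}{\alpha(x)+1},\qquad c>0.
\]
A case split according to whether $\phi(x,y)\le 1$ or $\phi(x,y)>1$ then produces the two summands in the claimed identity: on $\{\phi\le 1\}$ one gets $\phi(x,y)^{\alpha(x)+1}/(\alpha(x)+1)$, and on $\{\phi>1\}$ one gets $1/(\alpha(x)+1)$. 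Multiplying by $\psi(x,y)$ and integrating over $\Omega\times\Omega$ gives the right-hand side.

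There is essentially no obstacle: measurability of the slices $\{(x,y,\delta): \phi(x,y)>\delta\}$ follows from measurability of $\phi$, and the only analytic point to check is the convergence of $\int_0^c\delta^{\alpha(x)}\,d\delta$ at $0$, which is exactly what the assumption $\alpha(x)>-1$ guarantees. The proof is therefore a direct Tonelli exchange followed by an elementary antiderivative computation with a two-case split on the value of $\phi(x,y)$ relative to $1$.
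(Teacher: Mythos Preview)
Your proposal is correct and follows essentially the same route as the paper: both apply Fubini--Tonelli to swap the order of integration, evaluate the inner $\delta$-integral via the antiderivative $\delta^{\alpha(x)+1}/(\alpha(x)+1)$ (using $\alpha(x)>-1$), and split into the two cases $\phi\le 1$ and $\phi>1$. If anything, your write-up is slightly more explicit about the measurability and non-negativity hypotheses that justify Tonelli.
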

\begin{proof}
	From a direct computation, by using Fubini-Tonelli's theorem, we have
	$$
	\begin{aligned}
	&\int_0^1 \underset{\phi(x,y)>\delta}{\int\int} \delta^{\alpha(x)} \psi(x,y) \, dx\,dy\,d\delta = 
	\int_{\Omega} \int_{\Omega} \psi(x,y) \underset{\phi(x,y)>\delta}{\int_0^1} \delta^{\alpha(x)} \,d\delta \,dx\,dy\\
	& = \int_{\Omega} \int_{\Omega} \psi(x,y) \underset{\delta<\phi(x,y)\le1}{\int_0^1} \delta^{\alpha(x)} \,d\delta \,dx\,dy + \int_{\Omega} \int_{\Omega} \psi(x,y) \underset{\phi(x,y)>1}{\int_0^1} \delta^{\alpha(x)} \,d\delta \,dx\,dy \\
	& = \underset{\phi(x,y)\le1}{\int\int} \psi(x,y) \left[\frac{\delta^{\alpha(x)+1}}{\alpha(x)+1}\right]_0^{\phi(x,y)} \,dx\,dy + \underset{\phi(x,y)>1}{\int\int} \psi(x,y) \left[\frac{\delta^{\alpha(x)+1}}{\alpha(x)+1}\right]_0^1 \,dx\,dy \\
	& = \underset{\phi(x,y)\le1}{\int\int} \psi(x,y) \frac{\phi^{\alpha(x)+1}(x,y)}{\alpha(x)+1} \,dx\,dy + \underset{\phi(x,y)>1}{\int\int} \frac{1}{\alpha(x)+1}\psi(x,y) \,dx\,dy ,
	\end{aligned}
	$$
	and the assertion follows.
\end{proof}

\begin{theorem}
The following facts hold:
\begin{enumerate}
\item[(a)] for every $u\in W^{1,p^{\pm}}\left(\R^n\right)$, there exists $C>0$, depending only on $n$ and $p^\pm$, such that
$$
\begin{aligned}
\sup_{0<\varepsilon<1} & \underset{|u(x)-u(y)|\le1}{\int_{\R^n}\int_{\R^n}} \frac{\eps\,|u(x)-u(y)|^{p(x)+\eps}}{|x-y|^{n+p(x)}} \, dx \, dy \\
+ & \underset{|u(x)-u(y)|>1}{\int_{\R^n}\int_{\R^n}} \frac{1}{|x-y|^{n+p(x)}} \, dx \, dy \le C \left(\|\nabla u\|_{L^{p^+}\left(\R^n\right)}^{p^+}+\|\nabla u\|_{L^{p^-}\left(\R^n\right)}^{p^-}\right) ;
\end{aligned}
$$

\item[(b)] for every $u\in W^{1,p^{\pm}}\left(\R^n\right)$, 
$$
\lim_{\eps\to0} \underset{|u(x)-u(y)|\le1}{\int_{\R^n}\int_{\R^n}} \frac{\eps\,|u(x)-u(y)|^{p(x)+\eps}}{|x-y|^{n+p(x)}} \, dx \, dy = \int_{\R^n} p(x)\,K_{n,p(x)} |\nabla u(x)|^{p(x)} \, dx.
$$
\end{enumerate}
\end{theorem}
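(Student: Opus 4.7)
The plan is to reduce both assertions to the Nguyen-type formulas already established in Lemma~\ref{lemma4.2} and Theorem~\ref{teorema4.3.2} via the layer-cake identity of Lemma~\ref{lemma4.1}. Applying that identity with $\phi(x,y)=|u(x)-u(y)|$, $\alpha(x)=p(x)+\eps-1$ (so $\alpha(x)+1=p(x)+\eps>0$), and $\psi(x,y)=1/|x-y|^{n+p(x)}$, then factoring $\delta^{p(x)+\eps-1}=\delta^{\eps-1}\,\delta^{p(x)}$ and multiplying by $\eps$, I obtain the key identity
$$
\int_0^1 \eps\,\delta^{\eps-1}\,F(\delta)\,d\delta = \underset{|u(x)-u(y)|\le 1}{\int_{\R^n}\int_{\R^n}}\frac{\eps\,|u(x)-u(y)|^{p(x)+\eps}}{(p(x)+\eps)\,|x-y|^{n+p(x)}}\,dx\,dy + \underset{|u(x)-u(y)|> 1}{\int_{\R^n}\int_{\R^n}}\frac{\eps}{(p(x)+\eps)\,|x-y|^{n+p(x)}}\,dx\,dy,
$$
where $F(\delta):=\underset{|u(x)-u(y)|>\delta}{\int\int}\delta^{p(x)}/|x-y|^{n+p(x)}\,dx\,dy$. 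Running the same manipulation with $\psi(x,y)=p(x)/|x-y|^{n+p(x)}$ yields an analogous $p(x)$-weighted identity in which $F$ is replaced by $\tilde F(\delta):=\underset{|u(x)-u(y)|>\delta}{\int\int}p(x)\,\delta^{p(x)}/|x-y|^{n+p(x)}\,dx\,dy$.

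For (a), Lemma~\ref{lemma4.2} gives the uniform bound $F(\delta)\le M:=C(\|\nabla u\|_{L^{p^+}(\R^n)}^{p^+}+\|\nabla u\|_{L^{p^-}(\R^n)}^{p^-})$ on $(0,1]$. Since $\int_0^1 \eps\,\delta^{\eps-1}\,d\delta=1$, the left-hand side of the identity above is dominated by $M$; as both terms on the right are non-negative, each is individually $\le M$. Multiplying through by $p(x)+\eps\le p^++1$ cancels the $(p(x)+\eps)$ in the denominator of the first term and yields the required uniform bound on $\underset{|u(x)-u(y)|\le 1}{\int\int}\eps|u(x)-u(y)|^{p(x)+\eps}/|x-y|^{n+p(x)}\,dx\,dy$. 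The second integral in the statement is exactly $F(1)$, which is controlled by $M$ via Lemma~\ref{lemma4.2} with $\delta=1$.

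For (b), the core input is the following Abel-type fact: if $G:(0,1]\to\R$ is bounded and $G(\delta)\to L$ as $\delta\to 0^+$, then $\int_0^1 \eps\,\delta^{\eps-1}G(\delta)\,d\delta\to L$ as $\eps\to 0^+$ (split the interval at some small $\delta_0$ where $|G-L|<\eta$, and use $\int_0^1\eps\delta^{\eps-1}\,d\delta=1$ together with $\delta_0^{\eps}\to 1$). I would apply this to $G=\tilde F$: boundedness follows from $\tilde F\le p^+ F\le p^+ M$, while the pointwise limit $\tilde F(\delta)\to I:=\int_{\R^n}p(x)K_{n,p(x)}|\nabla u|^{p(x)}\,dx$ is precisely Theorem~\ref{teorema4.3.2}. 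In the $p(x)$-weighted identity, the boundary-set term $\underset{|u(x)-u(y)|>1}{\int\int}\eps p(x)/((p(x)+\eps)|x-y|^{n+p(x)})\,dx\,dy$ is $O(\eps)$ by part (a) and hence vanishes in the limit, giving
$$
\lim_{\eps\to 0}\underset{|u(x)-u(y)|\le 1}{\int_{\R^n}\int_{\R^n}}\frac{\eps\,p(x)\,|u(x)-u(y)|^{p(x)+\eps}}{(p(x)+\eps)\,|x-y|^{n+p(x)}}\,dx\,dy=I.
$$

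To conclude I would replace the multiplicative factor $p(x)/(p(x)+\eps)$ by $1$; the resulting discrepancy equals $\underset{|u(x)-u(y)|\le 1}{\int\int}\eps^{2}|u(x)-u(y)|^{p(x)+\eps}/((p(x)+\eps)|x-y|^{n+p(x)})\,dx\,dy$, which by part (a) is bounded by $(\eps/p^-)(p^++1)M$ and therefore tends to $0$ with $\eps$. The only genuine technical obstacle is the Abel-type limit fact above, which is elementary once the uniform boundedness of $\tilde F$ is in hand; everything else is bookkeeping required to carry the Brezis--Nguyen strategy through in presence of the variable exponent $p(x)$.
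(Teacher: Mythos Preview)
Your proposal is correct and follows essentially the same route as the paper: both parts rest on the layer-cake identity of Lemma~\ref{lemma4.1} combined with Lemma~\ref{lemma4.2} for (a) and Theorem~\ref{teorema4.3.2} for (b). The only cosmetic differences are that the paper takes $\psi=(p(x)+\eps)\,\eps/|x-y|^{n+p(x)}$ (so the factor $p(x)/(p(x)+\eps)$ never appears and no final correction is needed) and obtains your Abel-type averaging fact via an explicit splitting $\int_0^1=\int_0^\tau+\int_\tau^1$ followed by the change of variable $\delta=\tau z$, which is precisely the argument you sketch for that lemma.
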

\begin{proof}
\item[(a)] Let $u\in W^{1,p^{\pm}}\left(\R^n\right)$. By Lemma \ref{lemma4.2}, there exists $C>0$, depending only on $n$ and $p^\pm$, such that
$$
\underset{\left|u(x)-u(y)\right|>\delta} {\int_{\R^n}\int_{\R^n}} \frac{\delta^{p(x)}}{|x-y|^{n+p(x)}} \, dx \, dy \le C \left(\|\nabla u\|_{L^{p^+}\left(\R^n\right)}^{p^+}+\|\nabla u\|_{L^{p^-}\left(\R^n\right)}^{p^-}\right) ,
$$
for all $\delta>0$. 
Multiplying this inequation by $\eps\delta^{\eps-1}$, with $\eps\in(0,1)$, and integrating with respect to $\delta$ over the interval $(0,1)$, we have
\begin{equation}
\label{miserve2}
\int_0^1 \underset{\left|u(x)-u(y)\right|>\delta} {\int_{\R^n}\int_{\R^n}} \frac{\eps\delta^{p(x)+\eps-1}}{|x-y|^{n+p(x)}} \, dx \, dy \, d\delta
\le C \left(\|\nabla u\|_{L^{p^+}\left(\R^n\right)}^{p^+}+\|\nabla u\|_{L^{p^-}\left(\R^n\right)}^{p^-}\right).
\end{equation}
By using Lemma \ref{lemma4.1} with 
$$
\alpha(x) = p(x)+\eps-1, \qquad \phi(x,y)=|u(x)-u(y)| \qquad \text{and} \qquad \psi(x,y)=\dfrac{\eps}{|x-y|^{n+p(x)}},
$$
the integral at the left-hand side of the inequality becomes
$$
\begin{aligned}
\int_0^1 \underset{\left|u(x)-u(y)\right|>\delta} {\int_{\R^n}\int_{\R^n}} \frac{\eps\delta^{p(x)+\eps-1}}{|x-y|^{n+p(x)}} \, dx \, dy & = \underset{\left|u(x)-u(y)\right|\le1} {\int_{\R^n}\int_{\R^n}} \frac{\eps\left|u(x)-u(y)\right|^{p(x)+\eps}}{(p(x)+\eps)|x-y|^{n+p(x)}} \, dx \, dy \\
& + \underset{\left|u(x)-u(y)\right|>1} {\int_{\R^n}\int_{\R^n}} \frac{\eps}{(p(x)+\eps)|x-y|^{n+p(x)}} \, dx \, dy .
\end{aligned}
$$
In particular, by equation \eqref{miserve2}, it follows
\begin{equation*}
\sup_{0<\eps<1}\underset{\left|u(x)-u(y)\right|\le1} {\int_{\R^n}\int_{\R^n}} \frac{\eps\left|u(x)-u(y)\right|^{p(x)+\eps}}{|x-y|^{n+p(x)}} \, dx \, dy \le C\,(p^++1) \left(\|\nabla u\|_{L^{p^+}\left(\R^n\right)}^{p^+}+\|\nabla u\|_{L^{p^-}\left(\R^n\right)}^{p^-}\right).
\end{equation*}
Finally, by matching this formula with the starting inequality for $\delta=1$, it follows
$$
\begin{aligned}
\sup_{0<\varepsilon<1} & \underset{|u(x)-u(y)|\le1}{\int_{\R^n}\int_{\R^n}} \frac{\eps\,|u(x)-u(y)|^{p(x)+\eps}}{|x-y|^{n+p(x)}}\, dx \, dy \\
+ & \underset{|u(x)-u(y)|>1}{\int_{\R^n}\int_{\R^n}} \frac{1}{|x-y|^{n+p(x)}} \, dx \, dy \le C \left(\|\nabla u\|_{L^{p^+}\left(\R^n\right)}^{p^+}+\|\nabla u\|_{L^{p^-}\left(\R^n\right)}^{p^-}\right) ,
\end{aligned}
$$
for some constant $C$ depending only on $n$, $p^\pm$ and the first assertion follows.

\item[(b)] Let $u\in W^{1,p^{\pm}}\left(\R^n\right)$. 
Let us compute the limit
$$
\lim_{\eps\to0} \int_0^1 \eps\,\delta^{\eps-1} \underset{|u(x)-u(y)|>\delta}{\int_{\R^n}\int_{\R^n}}  \frac{(p(x)+\eps)\delta^{p(x)}}{|x-y|^{n+p(x)}} \,dx\,dy\,d\delta .
$$
Taking $\tau\in(0,1)$, we can write the integral with respect to $\delta$ as
$$
\begin{aligned}
\lim_{\eps\to0^+} \int_0^1 \eps\,\delta^{\eps-1} \underset{|u(x)-u(y)|>\delta}{\int_{\R^n}\int_{\R^n}} & \frac{(p(x)+\eps)\delta^{p(x)}}{|x-y|^{n+p(x)}} \,dx\,dy\,d\delta \\
& =\lim_{\tau\to0^+}\lim_{\eps\to0^+} \int_0^{\tau} \eps\,\delta^{\eps-1} \underset{|u(x)-u(y)|>\delta}{\int_{\R^n}\int_{\R^n}}  \frac{(p(x)+\eps)\delta^{p(x)}}{|x-y|^{n+p(x)}} \,dx\,dy\,d\delta\\
&+\lim_{\tau\to0^+}\lim_{\eps\to0^+} \int_{\tau}^1 \eps\,\delta^{\eps-1} \underset{|u(x)-u(y)|>\delta}{\int_{\R^n}\int_{\R^n}}  \frac{(p(x)+\eps)\delta^{p(x)}}{|x-y|^{n+p(x)}} \,dx\,dy\,d\delta.
\end{aligned}
$$
On the one hand, the second integral at the right-hand side goes to $0$ as $\eps\to0$, since
$$
\begin{aligned}
& \lim_{\eps\to0^+} \int_{\tau}^1 \eps\,\delta^{\eps-1} \underset{|u(x)-u(y)|>\delta}{\int_{\R^n}\int_{\R^n}}  \frac{(p(x)+\eps)\delta^{p(x)}}{|x-y|^{n+p(x)}} \,dx\,dy\,d\delta \\
&\le \lim_{\eps\to0^+} \int_{\tau}^1 \eps\,\delta^{\eps-1} \underset{|u(x)-u(y)|>\delta}{\int_{\R^n}\int_{\R^n}}  \frac{(p^++\eps)\delta^{p(x)}}{|x-y|^{n+p(x)}} \,dx\,dy\,d\delta \\
& \le
 {C}_{n,p^{\pm}} \left(\|\nabla u\|_{L^{p^+}\left(\R^n\right)}^{p^+}+\|\nabla u\|_{L^{p^-}\left(\R^n\right)}^{p^-}\right) \lim_{\eps\to0}\, [(p^++\eps)(1-\tau^\eps)]=0.
\end{aligned}
$$
On the other hand, we can write the first integral by making the change of variable $\delta=\tau z$, 
$$
\begin{aligned}
\int_0^{\tau} \eps\,\delta^{\eps-1} & \underset{|u(x)-u(y)|>\delta}{\int_{\R^n}\int_{\R^n}}  \frac{(p(x)+\eps)\delta^{p(x)}}{|x-y|^{n+p(x)}} \,dx\,dy\,d\delta \\
& = \int_0^1 \eps\,\tau^\eps z^{\eps-1} \underset{|u(x)-u(y)|>\tau z}{\int_{\R^n}\int_{\R^n}} \frac{(p(x)+\eps)(\tau z)^{p(x)}}{|x-y|^{n+p(x)}}\,dx\,dy\, dz .
\end{aligned}
$$
From the arbitrariness of $\tau\in(0,1)$, making the limit as $\tau\to0^+$ and remembering both the anisotropic limit formulas, it happens that
$$
\begin{aligned}
&\lim_{\tau\to0^+}\lim_{\eps\to0^+}  \int_0^{\tau} \eps\,\delta^{\eps-1} \underset{|u(x)-u(y)|>\delta}{\int_{\R^n}\int_{\R^n}}  \frac{(p(x)+\eps)\delta^{p(x)}}{|x-y|^{n+p(x)}} \,dx\,dy\,d\delta \\
&= \lim_{\tau\to0^+}\lim_{\eps\to0^+}   \tau^\eps \int_0^1 \eps\, z^{\eps-1} \underset{|u(x)-u(y)|>\tau z}{\int_{\R^n}\int_{\R^n}}  \left(\frac{p(x)\,(\tau z)^{p(x)}}{|x-y|^{n+p(x)}} +\eps \, \frac{(\tau z)^{p(x)}}{|x-y|^{n+p(x)}}\right)\,dx\,dy\, dz \\
&= \lim_{\tau\to0^+}\lim_{\eps\to0^+}    \int_0^1 \eps\, z^{\eps-1} \underset{|u(x)-u(y)|>\tau z}{\int_{\R^n}\int_{\R^n}} \frac{p(x)\,(\tau z)^{p(x)}}{|x-y|^{n+p(x)}} \,dx\,dy\, dz \\
&= \int_{\R^n} p(x)\,K_{n,p(x)}|\nabla u(x)|^{p(x)}\,dx.
\end{aligned}
$$
In turn, we can conclude that
$$
\lim_{\eps\to0} \int_0^1 \underset{|u(x)-u(y)|>\delta}{\int_{\R^n}\int_{\R^n}} \frac{(p(x)+\eps)\,\eps\,\delta^{p(x)+\eps-1}}{|x-y|^{n+p(x)}} \,dx\,dy\,d\delta = \int_{\R^n} p(x)\,K_{n,p(x)} |\nabla u(x)|^{p(x)}\,dx .
$$
Applying Lemma \ref{lemma4.1} with
$$
\alpha(x) = p(x)+\eps-1, \qquad \phi(x,y) =|u(x)-u(y)| \qquad \text{and} \qquad \psi(x,y) =\dfrac{(p(x)+\eps)\,\eps}{|x-y|^{n+p(x)}}, 
$$
we write
$$
\begin{aligned}
\int_0^1 \underset{|u(x)-u(y)|>\delta}{\int_{\R^n}\int_{\R^n}} \frac{(p(x)+\eps)\,\eps\,\delta^{p(x)+\eps-1}}{|x-y|^{n+p(x)}} \,dx\,dy\,d\delta & = \underset{\left|u(x)-u(y)\right|\le1} {\int_{\R^n}\int_{\R^n}} \frac{\eps\left|u(x)-u(y)\right|^{p(x)+\eps}}{|x-y|^{n+p(x)}} \, dx \, dy \\
&+\underset{\left|u(x)-u(y)\right|>1} {\int_{\R^n}\int_{\R^n}} \frac{\eps}{|x-y|^{n+p(x)}} \, dx \, dy.
\end{aligned}
$$
Hence, by taking the limit as $\eps\to0$, the assertion follows.
\end{proof}

\begin{theorem}
The following facts hold:
\begin{enumerate}
\item[(a)] if $u\in L^{p(\cdot)}\left(\R^n\right) \cap C^2_b\left(\R^n\right)$ is a bounded $C^2$ function and
$$
\sup_{0<\varepsilon<1} \underset{|u(x)-u(y)|\le1}{\int_{\R^n}\int_{\R^n}} \frac{\eps\,|u(x)-u(y)|^{p(x)+\eps}}{|x-y|^{n+p(x)}}\, dx \, dy + \underset{|u(x)-u(y)|>1}{\int_{\R^n}\int_{\R^n}} \frac{1}{|x-y|^{n+p(x)}} \, dx \, dy<+\infty ,
$$
then $u\in W^{1,p(\cdot)}\left(\R^n\right)$;

\item[(b)] if $u\in L^{p(\cdot)}\left(\R^n\right)$ and 
$$
\sup_{0<\delta<1}  \underset{|u(x)-u(y)|>\delta}{\int_{\R^n}\int_{\R^n}} \frac{\delta^{p(x)}}{|x-y|^{n+p(x)}}\, dx \, dy <+\infty ,
$$
then $u\in W^{1,p(\cdot)}\left(\R^n\right)$.
\end{enumerate}
\end{theorem}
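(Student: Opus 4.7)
I would prove part (a) directly from Lemma \ref{lemma4.4}, and reduce part (b) to part (a) through a double argument: first, an integration trick converts the hypothesis on the $\delta$-family into the hypothesis of (a) for the same function $u$; second, mollification lifts the conclusion from $C^2_b$ functions to general $u\in L^{p(\cdot)}(\R^n)$.

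For part (a), Lemma \ref{lemma4.4} applies whenever $u\in L^{p(\cdot)}(\R^n)\cap C^2(\R^n)$ satisfies
\[
C(u):=\sup_{0<\eps<1}\int_{\R^n}\int_{\R^n}\frac{\eps\,|u(x)-u(y)|^{p(x)+\eps}}{|x-y|^{n+p(x)}}\,dx\,dy<+\infty.
\]
I split this double integral according to whether $|u(x)-u(y)|\le 1$ or $|u(x)-u(y)|>1$. The first region is dominated uniformly in $\eps$ by the first term in the hypothesis of (a). On the second, since $u\in C^2_b(\R^n)$ gives $|u(x)-u(y)|\le M:=2\|u\|_\infty$, I bound $|u(x)-u(y)|^{p(x)+\eps}\le\max\{M,1\}^{p^++1}$, so this contribution is at most a constant (independent of $\eps$) times $\iint_{|u(x)-u(y)|>1}|x-y|^{-n-p(x)}\,dx\,dy$, which is finite by the second term in the hypothesis. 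Hence $C(u)<+\infty$ and Lemma \ref{lemma4.4} yields $u\in W^{1,p(\cdot)}(\R^n)$.

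For part (b), I would first mimic the step (a) of the previous theorem: multiply the bound $\iint_{|u(x)-u(y)|>\delta}\delta^{p(x)}|x-y|^{-n-p(x)}dx\,dy\le M_0$ by $\eps\delta^{\eps-1}$, integrate $\delta\in(0,1)$, and apply Lemma \ref{lemma4.1} with $\alpha(x)=p(x)+\eps-1$, $\phi(x,y)=|u(x)-u(y)|$, $\psi(x,y)=\eps/|x-y|^{n+p(x)}$. This identifies the left-hand side with
\[
\iint_{|u(x)-u(y)|\le 1}\frac{\eps\,|u(x)-u(y)|^{p(x)+\eps}}{(p(x)+\eps)|x-y|^{n+p(x)}}\,dx\,dy+\iint_{|u(x)-u(y)|>1}\frac{\eps}{(p(x)+\eps)|x-y|^{n+p(x)}}\,dx\,dy\le M_0.
\]
Since both summands are non-negative and $p(x)+\eps\le p^++1$, I obtain the uniform bound $\sup_{0<\eps<1}\iint_{|u(x)-u(y)|\le 1}\eps\,|u(x)-u(y)|^{p(x)+\eps}/|x-y|^{n+p(x)}\,dx\,dy\le(p^++1)M_0$, while the choice $\delta=1$ in the original hypothesis supplies $\iint_{|u(x)-u(y)|>1}|x-y|^{-n-p(x)}dx\,dy\le M_0$. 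This realizes exactly the integral hypothesis of (a), modulo the missing $C^2_b$ regularity.

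To dispose of the smoothness assumption, I would mollify: set $u_\eta=\rho_\eta\ast u$ for a standard symmetric mollifier $\rho_\eta$. Since $u\in L^{p(\cdot)}(\R^n)\hookrightarrow L^1_{\rm loc}(\R^n)$ (via H\"older with variable exponents), each $u_\eta$ lies in $C^\infty\cap L^\infty$, hence in $C^2_b\cap L^{p(\cdot)}$. If the two estimates derived above transfer to $u_\eta$ with constants independent of $\eta$, then part (a) applied to $u_\eta$ produces $u_\eta\in W^{1,p(\cdot)}(\R^n)$ with $\|\nabla u_\eta\|_{L^{p(\cdot)}}$ bounded uniformly in $\eta$; weak compactness of $\{\nabla u_\eta\}$ in $L^{p^-}(\R^n)\cap L^{p^+}(\R^n)$ together with the strong $L^{p(\cdot)}$-convergence $u_\eta\to u$ then identifies $\nabla u$ as the weak limit, giving $u\in W^{1,p(\cdot)}(\R^n)$.

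\textbf{Main obstacle.} The delicate point is the uniform transfer of the integral estimates from $u$ to $u_\eta$. In the constant-exponent case, Jensen's inequality combined with the translation $(x,y)\mapsto(x+z,y+z)$ immediately yields the monotonicity $\iint\eps|u_\eta(x)-u_\eta(y)|^p/|x-y|^{n+p}\le\iint\eps|u(x)-u(y)|^p/|x-y|^{n+p}$. For variable $p$, the very same translation produces $p(x+z)$ in the integrand rather than $p(x)$, so the matching with the denominator is broken. Controlling this exponent drift for $|z|\le\eta$ is the crux of the argument and plausibly requires the log-H\"older continuity of $p$ (so that $|p(x+z)-p(x)|\to 0$ uniformly as $\eta\to 0$, by means of the decay condition \eqref{holder}), or else a direct pointwise comparison between $|u_\eta(x)-u_\eta(y)|$ and suitable averages of $|u(\cdot)-u(\cdot)|$ tailored to the variable-exponent framework.
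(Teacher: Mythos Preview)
Your approach matches the paper's exactly for part (a) and for the $C^2_b$ case of part (b): the same splitting followed by Lemma~\ref{lemma4.4}, and the same integration-in-$\delta$ trick via Lemma~\ref{lemma4.1} to reduce (b) to (a). For the passage to general $u$ in (b), the paper does not supply details either---it writes only ``In the general case, one can argue as in \cite{NPSV} by using a density argument''---so your discussion of the mollification step actually goes further than the paper, and your identification of the exponent-drift obstacle under translation is well taken (indeed \cite{NPSV} treats constant exponents, so the citation does not transparently settle the variable-exponent issue you raise).

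Two small technical points in your sketch: $u_\eta=\rho_\eta\ast u$ is not automatically in $L^\infty$ for $u\in L^{p(\cdot)}$, so a preliminary truncation is needed before mollifying; and the weak-compactness step should be run directly in the reflexive space $L^{p(\cdot)}(\R^n)$ (valid since $p^->1$), not in $L^{p^-}\cap L^{p^+}$, which you have no a priori reason to control.
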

\begin{proof}
\item[(a)] Let $u\in L^{p(\cdot)}\left(\R^n\right)$ be a bounded $C^2$ function satisfying (a). Then
$$
\begin{aligned}
\sup_{0<\varepsilon<1} {\int_{\R^n}\int_{\R^n}} & \frac{\eps\,|u(x)-u(y)|^{p(x)+\eps}}{|x-y|^{n+p(x)}}\, dx \, dy \\
& \le \sup_{0<\varepsilon<1}  \underset{|u(x)-u(y)|\le1}{\int_{\R^n}\int_{\R^n}} \frac{\eps\,|u(x)-u(y)|^{p(x)+\eps}}{|x-y|^{n+p(x)}}\, dx \, dy \\ 
& + 2^{p^++1}\max\left\{1,\|u\|_{L^\infty\left(\R^n\right)}^{p^++1}\right\} \underset{|u(x)-u(y)|>1}{\int_{\R^n}\int_{\R^n}} \frac{1}{|x-y|^{n+p(x)}}\, dx \, dy < +\infty .
\end{aligned}
$$
The assertion follows from Lemma \ref{lemma4.4}.
\vskip4pt
\item[(b)] Let $u\in L^{p(\cdot)}\left(\R^n\right) \cap C^2_b\left(\R^n\right)$ such that
\begin{equation}
\label{qsta}
\sup_{0<\delta<1} \underset{|u(x)-u(y)|>\delta}{\int_{\R^n}\int_{\R^n}} \frac{\delta^{p(x)}}{|x-y|^{n+p(x)}}\, dx \, dy \le C <+\infty ,
\end{equation}
for some constant $C>0$. 
Multiplying the inequality by $\eps\delta^{\eps-1}$, with $\eps\in(0,1)$, and integrating with respect to $\delta$ over the interval $(0,1)$, we have
$$
\int_0^1\underset{|u(x)-u(y)|>\delta}{\int_{\R^n}\int_{\R^n}} \frac{\eps\delta^{p(x)+\eps-1}}{|x-y|^{n+p(x)}}\, dx \, dy\,d\delta \le C .
$$
Applying Lemma \ref{lemma4.1} with
$$
\alpha(x) = p(x)+\eps-1, \qquad \phi(x,y)=|u(x)-u(y)| \qquad \text{and} \qquad \psi(x,y)=\dfrac{\eps}{|x-y|^{n+p(x)}},
$$
the left-hand side of the last inequality becomes
$$
\underset{\left|u(x)-u(y)\right|\le1} {\int_{\R^n}\int_{\R^n}} \frac{\eps\left|u(x)-u(y)\right|^{p(x)+\eps}}{(p(x)+\eps)|x-y|^{n+p(x)}} \, dx \, dy  + \underset{\left|u(x)-u(y)\right|>1} {\int_{\R^n}\int_{\R^n}} \frac{\eps}{(p(x)+\eps)|x-y|^{n+p(x)}} \, dx \, dy\le C ,
$$
so we get
$$
\begin{aligned}
\sup_{\eps\in(0,1)} \underset{\left|u(x)-u(y)\right|\le1} {\int_{\R^n}\int_{\R^n}} \frac{\eps\left|u(x)-u(y)\right|^{p(x)+\eps}}{|x-y|^{n+p(x)}}\le C(p^++1) .
\end{aligned}
$$
Recalling that, by \eqref{qsta},
$$
\underset{|u(x)-u(y)|>1}{\int_{\R^n}\int_{\R^n}} \frac{1}{|x-y|^{n+p(x)}}\, dx \, dy \le C,
$$
assumption (a) of the Theorem is satisfied, so $u\in W^{1,p(\cdot)}\left(\R^n\right)$ and the proof is complete.
In the general case, one can argue as in \cite{NPSV} by using a density argument.
\end{proof}

We conclude with an observation dealing with the limiting case $p^-=1$.
\begin{remark}
	\label{remp1}
	Let $u \in L^{p(\cdot)}\left(\R^n\right)$ with $1=p^- \le p^+<+\infty$
	and set $E=\left\{x\in\R^n: p(x)=1\right\}$. Assume ${\rm int}(E)\neq \emptyset$ and
	$$
	\Lambda=\sup_{0<\delta<1}  \underset{|u(x)-u(y)|>\delta}{\int_{\R^n}\int_{\R^n}} \frac{\delta^{p(x)}}{|x-y|^{n+p(x)}}\, dx \, dy <+\infty.
	$$
	Then
	$$
	\sup_{B\subset {\rm int}(E)} |B|^{-\frac{n+1}{n}}\int_B\int_B |u(x)-u(y)|\,dx\,dy<+\infty.
	$$
	In particular, for $n=1$, it reads as
	$$
	\sup_{B\subset {\rm int}(E)} \dashint_B\dashint_B |u(x)-u(y)|\,dx\,dy<+\infty,
	$$
	so $u\in BMO(E)$, the space of bounded mean oscillation functions on $E$.
	In fact, let $x_0\in {\rm int}(E)$ and $B\subset {\rm int}(E)$
	a ball centered at $x_0$. 
	We have, for all $\delta\in(0,1)$,
	$$
	\underset{|u(x)-u(y)|>\delta}{\int_{B}\int_{B}} \frac{\delta}{|x-y|^{n+1}}\, dx \, dy \leq    \underset{|u(x)-u(y)|>\delta}{\int_{\R^n}\int_{\R^n}} \frac{\delta^{p(x)}}{|x-y|^{n+p(x)}}\, dx \, dy
	\leq \Lambda .
	$$
	Then, taking into account \cite[(a) of Theorem 1]{Ng11}, we have 
	\begin{align*}
	\int_B\int_B |u(x)-u(y)|\,dx\,dy &\leq C\left(|B|^{\frac{n+1}{n}}\! \underset{|u(x)-u(y)|>\delta}{\int_{B}\int_{B}} \frac{\delta}{|x-y|^{n+1}}\, dx \, dy+\delta|B|^2  \right) \\
	&\leq C\left(\Lambda |B|^{\frac{n+1}{n}}+\delta|B|^2  \right) ,
	\end{align*}
	for some constant $C$ depending on $n$. Then the above assertions follow by the arbitrariness of $\delta\in(0,1)$ -- making the limit as $\delta\to0^+$ -- and $B\subset {\rm int}(E)$.
\end{remark}

\bigskip

\end{document}